\newcommand{\A}{\mathcal{A}}
\newcommand{\g}{\mathfrak{g}}
\newcommand{\h}{{\mathfrak{h}}}
\newcommand{\s}{{\mathfrak{s}}}
\newcommand{\ad}{{\mathrm{ad}}}
\newcommand{\M}{{\cal M}}
\newcommand{\p}{{\mathfrak{p} }}
\newtheorem{definition}{Definition}[section]
\newtheorem{theorem}{Theorem}[section]
\newtheorem{proposition}{Proposition}[section]
\newtheorem{lemma}{Lemma}[section]
\newtheorem{corollary}{Corollary}[section]
\newtheorem{remark}{Remark}[section]
\numberwithin{equation}{section}
\def\K{\mathbb{ K}}
\begin{document}
		\title{Non-existence of symmetric biderivations on finite-dimensional perfect Lie algebras}
		\author{
		Ignacio Bajo\footnote{Depto. Matem\'atica Aplicada II, E. I. Telecomunicaci\'on, Universidade de Vigo, 36310 Vigo, Spain. ibajo@dma.uvigo.es }, Sa\"{\i}d Benayadi\footnote{{\it Corresponding author.} Universit\'e de Lorraine, Laboratoire IECL, CNRS-UMR 7502, UFR MIM, 3 rue Augustin Frenel, BP 45112, 57073 Metz Cedex 03, France. said.benayadi@univ-lorraine.fr. }, Hassan Oubba\footnote{Universit\'e Moulay Isma\"{i}l,
			Facult\'e des sciences -  D\'epartement de Math\'ematiques,
			B.P. 11201, Zitoune, M\'ekn\`es 50000, Morocco. 
			hassan.oubba@edu.umi.ac.ma}}
	
	\maketitle

\begin{abstract}
	We show that there are no symmetric non-zero biderivations on perfect  Lie algebras of finite dimension over a field of characteristic zero. We show that this is equivalent to  show that every symmetric biderivation on a finite-dimensional perfect Lie algebra over such a field with values in a finite-dimensional module vanishes identically. This answers an open question posed by M. Bre\v{s}ar and K. Zhao in \cite{br2}.  
\end{abstract}
	%

\section{Introduction} \label{Introduction}
Let $(\g,[\,,\,])$ be a Lie algebra and $M$  a vector space. A {\it representation of $\g$ on $M$} is a morphism of Lie algebras $\psi: \g \to {\mathfrak{gl}}(M)$, this is to say, $\psi$ is a linear map such that $\psi([x,y])= 
\psi(x)\psi(y) - \psi(y)\psi(x)$ for all $x, y \in \g$. In this case $M$ is also called a {\it $\g$-module}.  
A linear map $D: \g \to M$ is called a {\it derivation of $\g$ with values in $M$} if
$D([x,y])= \psi(x)D(y) - \psi(y)D(x)$ holds for all $x,y \in \g.$ Notice that this means that
 $D$ is a $1$-cocycle  for the cohomology of $\g$ with values in $M$ (i.e.  $D\in Z^1(\g,M)$).
A {\it biderivation $\varphi$ of $\g$ with values in $M$} is a bilinear map $\varphi: \g\times \g \to M$  such that $\varphi(x,\cdot)$ and $\varphi(\cdot,x)$ are derivations of $\g$ with values in $M$, for all $x\in\g.$  If $\varphi$ is also symmetric, then $\varphi$ is called a {\it symmetric biderivation of $\g$ with values in $M$}.
If $M= \g$ and $\psi$  is  the adjoint representation of $\g$, this is to say, $\psi(x)(y)= \ad(x)y=[x,y],$ for all $x,y \in \g$, a biderivation of $\g$ with  values in $M= \g$ will be called just  a {\it biderivation of $\g$}.

Let $(\A,\cdot)$ be a nonassociative algebra and denote by $\mathrm{L}_x$ and $\mathrm{R}_x$, respectively, the left and right multiplications by an element $x\in\A$. Let us also denote  $\A^-= (A,[\,,\,]),$ where $[\,,\,]$ is the usual commutator of $(\A,\cdot)$. In \cite{BO} it is said that $(\A,\cdot)$ is an {\it algebra of biderivation type}, or an {\it $A_{BD}$-algebra} for short, if $\mathrm{L}_x$ and $\mathrm{R}_x$ are derivations of $\A^-$ for all $x\in\A$.   It is easily proved that the product in an $A_{BD}$-algebra $(\A,\cdot)$ is Lie-admissible and, therefore, $\A^-$ is a Lie algebra. Further, the  multiplication in $(\A,\cdot)$ provides a biderivation of the Lie algebra $\A^-.$  It was shown in \cite{BO} that this class of $A_{BD}$-algebras includes Lie algebras, symmetric Leibniz algebras, Lie-admissible left (or right) Leibniz algebras, Milnor algebras, and LR-algebras. Moreover, it has also been  proved that there is a one-to-one correspondence between the class of $A_{BD}$-algebras and the class of Lie algebras with symmetric biderivations. Explicitly, if $(\g,\varphi)$ is a Lie algebra with a symmetric biderivation, one defines on the vector space $\A=\g$ the new product $x\cdot y=\frac{1}{2}(\varphi(x,y)+[x,y])$, for $x,y\in\g$, and, conversely, given an $A_{BD}$-algebra $(\A,\cdot)$, one defines on $\g=\A^-$ the symmetric biderivation defined by $\varphi(x,y)=x\cdot y+y\cdot x$ for all $x,y\in\A^-$. In the sequel, if $(\g,[\,,\,])$ is a Lie algebra and $\varphi$ is a symmetric biderivation, we will frequently use the notation $\varphi(x,y)=x\circ y$ and we will say that $\varphi$ or $\circ$ is an {\it $A_{BD}$-structure on $\g$}. We will say that the symmetric biderivation is trivial if it is identically null.

A particular case of $A_{BD}$-structure on Lie algebras is provided by a so-called {\it commutative post-Lie algebra structure} (or {\it CPA-structure}) \cite{BD1,burde2}, which consist on a  symmetric  biderivation $\circ$ on the Lie algebra $\g$ satisfying the additional identity
	$[x,y]\circ  z= x \circ  (y  \circ z)- y \circ  (x \circ  z)$, for all $x,y,z\in\g$. Notice that this last identity is equivalent to the fact  that left multiplication with respect to  $\circ$  gives a representation of $\g$ on $\g$. It was proved in  \cite{burde2} that every $CPA$-structure on a real or complex perfect Lie algebra $\g$ (i.e. such that $[\g,\g]=\g$)  is trivial. Further, in \cite{BO} it is proved that every $A_{BD}$-structure on a semisimple real or complex Lie algebra must  vanish identically. Therefore the natural question of the existence of nontrivial $A_{BD}$-structures on perfect Lie algebras arises naturally.

Besides, in the paper \cite{br2} by  M. Bre\v{s}ar and K. Zhao it is posed the question of whether there exists a perfect Lie algebra $\g$ of finite dimension  and  a $\g$-module $M$ of finite dimension with $M^{\g}= \{ v\in M\ :\ \psi(\g)v=\{0\}\}=\{0\}$ admitting a nonzero symmetric biderivation $\varphi$ of $\g$ with values in $M$. Obviously, the existence of a nontrivial $A_{BD}$-structure on a finite-dimensional perfect Lie algebra, would answer Bre\v{s}ar and  Zhao's open question affirmatively. It should be remarked that in \cite{br2} an example of a nonzero symmetric biderivation with values in  an infinite-dimensional module of ${\mathfrak{sl}}(2)$ is given, and that such example has been used in \cite{BO} to find an infinite-dimensional perfect Lie algebra that can be endowed with a nonzero $A_{BD}$-structure. However, both problems in finite dimension remained unsolved until now.

The main goal of this paper is to answer Bre\v{s}ar and  Zhao's open question in the negative. Actually we will prove that, if $\varphi$ is a symmetric biderivation of a perfect finite-dimensional Lie algebra $\g$ with values in finite-dimensional $\g$-module $M$, then $\varphi=0$, even in the case $M^{\g}\ne\{0\}$.  The structure of the paper is as follows. We first show that Bre\v{s}ar and  Zhao's question is actually equivalent to the question of existence of nontrivial $A_{BD}$-structures on finite-dimensional perfect Lie algebras. Next, we show that every $A_{BD}$-structure on a Lie algebra with Levi decomposition $\g=\s\ltimes_\phi R(\g)$ is completely defined by three maps $F:\s\to R(\g)$, $G:\s\to R(\g)$ and $\Delta: R(\g)\to \mbox{Der}(R(\g))$ verifying certain equalities.  
In Section \ref{reduc}, we focus on perfect Lie algebras and prove that if there exist a  perfect Lie algebra with a nontrivial $A_{BD}$-structure then we can find a perfect Lie algebra with abelian radical also admitting nonzero $A_{BD}$-structures. This reduces our problem to the case in which the radical is commutative. The nonexistence of $A_{BD}$-structures in perfect Lie algebras with abelian radical is done in our last section, completing the proof of our non-existence result.

\section{An equivalence result}

 From now on, unless otherwise stated, all the vector spaces considered in this  paper  are finite-dimensional  vector spaces over a field $\K$ of characteristic zero.  

Let us recall some of the definitions given in the introduction.

\begin{definition} {\em Let $\psi: \g \to {\mathfrak{gl}}(M)$ be a representation of a Lie algebra $\g$ on a vector space $M$. A {\it biderivation $\varphi$ of $\g$ with values in $M$} is a bilinear map $\varphi: \g\times \g \to M$  such that 
\begin{equation}\label{phibider}\varphi(x,[y,z])=\psi(y)\varphi(x,z)-\psi(z)\varphi(x,y),\quad \varphi([y,z],x)=\psi(y)\varphi(z,x)-\psi(z)\varphi(y,x)\end{equation}
hold for all $x,y,z\in\g.$  If, further, $\varphi(x,y)=\varphi(y,x)$ for all $x,y\in\g$, then $\varphi$ is called a {\it symmetric biderivation of $\g$ with values in $M$.}}
\end{definition}

\begin{definition} {\em An {\it $A_{BD}$-structure} on a Lie algebra $\g$ is a commutative product $\circ:\g\times\g\to\g$ which verifies $x\circ [y,z]=[x\circ y,z]+[y,x\circ z]$ for any $x,y,z\in\g$. This is equivalent to the bilinear map $\varphi$ defined by  $\varphi(x,y)=x\circ y$ being a symmetric biderivation of  $\g$ with values on the adjoint module  $\g$.}
\end{definition}

\begin{theorem}\label{Equi}
There exists a nonzero symmetric biderivation of a perfect Lie algebra with values in a module if and only if there exists  a perfect Lie algebra admitting a nonzero $A_{BD}$-structure.
\end{theorem}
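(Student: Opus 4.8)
The plan is to treat the two implications separately; the reverse direction is essentially immediate, while the forward direction requires a semidirect-product construction. For the ($\Leftarrow$) direction, suppose $\g$ is a perfect Lie algebra carrying a nonzero $A_{BD}$-structure $\circ$. By the very definition of an $A_{BD}$-structure, the symmetric bilinear map $\varphi(x,y)=x\circ y$ is a symmetric biderivation of $\g$ with values in the adjoint module $\g$, and it is nonzero. This already exhibits a nonzero symmetric biderivation of a perfect Lie algebra with values in a module, so nothing further is needed here.

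For the ($\Rightarrow$) direction, I would start from a perfect Lie algebra $\g$, a finite-dimensional $\g$-module $(M,\psi)$, and a nonzero symmetric biderivation $\varphi\colon\g\times\g\to M$. The idea is to build the semidirect product $\tilde\g=\g\ltimes M$, with $M$ an abelian ideal and bracket $[(x,m),(y,n)]=([x,y],\psi(x)n-\psi(y)m)$, and to transport $\varphi$ to an $A_{BD}$-structure on $\tilde\g$ via $(x,m)\circ(y,n)=(0,\varphi(x,y))$. This product is commutative because $\varphi$ is symmetric, and a direct computation shows that the $A_{BD}$-identity $u\circ[v,w]=[u\circ v,w]+[v,u\circ w]$ on $\tilde\g$ reduces exactly to the first biderivation identity in (\ref{phibider}) for $\varphi$; hence $\circ$ is a genuine and nonzero $A_{BD}$-structure on $\tilde\g$.

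The main obstacle is ensuring that $\tilde\g$ is perfect. Since $[\tilde\g,\tilde\g]=[\g,\g]\oplus\psi(\g)M=\g\oplus\psi(\g)M$, perfectness of $\tilde\g$ is equivalent to $\psi(\g)M=M$, which need not hold for an arbitrary module. To arrange it, I would first use perfectness of $\g$ to confine the image of $\varphi$ to a stable submodule. Writing $w\in\g$ as a sum of brackets $[y,z]$ and applying (\ref{phibider}) shows that $\varphi(\g,\g)\subseteq\psi(\g)M$; iterating the same argument, now using that the values of $\varphi$ already lie in $\psi(\g)M$, gives $\varphi(\g,\g)\subseteq\psi(\g)^kM$ for every $k$. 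As $M$ is finite-dimensional, the descending chain $M\supseteq\psi(\g)M\supseteq\psi(\g)^2M\supseteq\cdots$ stabilizes at a submodule $M_\infty=\psi(\g)^kM$ satisfying $\psi(\g)M_\infty=M_\infty$.

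Replacing $M$ by $M_\infty$, which is a submodule containing the nonzero image of $\varphi$, I obtain a finite-dimensional module on which $\psi(\g)M_\infty=M_\infty$ together with a nonzero symmetric biderivation taking values in it. Applying the construction of the second paragraph to this module then yields a perfect Lie algebra $\tilde\g=\g\ltimes M_\infty$ carrying a nonzero $A_{BD}$-structure, which completes the proof. The only delicate point is the reduction to a module with $\psi(\g)M_\infty=M_\infty$; once this is in place, both the perfectness of $\tilde\g$ and the verification that $\circ$ is an $A_{BD}$-structure are routine.
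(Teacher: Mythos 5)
Your proof is correct, and its skeleton coincides with the paper's: the reverse direction is the same triviality, and the forward direction uses the same semidirect product $\g\ltimes W$ for a suitable submodule $W\subseteq M$, with the same product $(x+v)\circ(x'+v')=\varphi(x,x')$ and the same verification that the $A_{BD}$-identity collapses to (\ref{phibider}). The one genuine difference is how $W$ is produced. The paper goes bottom-up: it takes $W=V$ to be the submodule generated by the image of $\varphi$, namely the span of $\M_0=\varphi(\g,\g)$ and its iterates $\M_k=\psi(\g)\M_{k-1}$, and obtains $\psi(\g)V=V$ directly from the inclusion $\M_0\subseteq\mathrm{span}\,\psi(\g)\M_0$ --- which is exactly your observation that perfectness plus (\ref{phibider}) forces $\varphi(\g,\g)\subseteq\mathrm{span}\,\psi(\g)\varphi(\g,\g)$. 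You go top-down: you take $W=M_\infty$, the stable term of the descending chain of spans $\psi(\g)^kM$, and you then need two facts: that the chain stabilizes, which uses finite-dimensionality of $M$, and that $\varphi(\g,\g)\subseteq\psi(\g)^kM$ for every $k$, which is the same key inclusion iterated. Both arguments are sound; what the paper's ascending construction buys is independence of dimension (no stabilization is needed, so the identical proof applies to infinite-dimensional modules such as the Bre\v{s}ar--Zhao $\mathfrak{sl}(2)$-example cited in the introduction, which is how \cite{BO} produced an infinite-dimensional perfect algebra with a nonzero $A_{BD}$-structure), whereas your descending construction is perfectly adequate under the paper's standing finite-dimensionality convention and has the mild aesthetic advantage that $M_\infty$ is defined canonically, without reference to $\varphi$.
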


\begin{proof}
If there exists  a perfect Lie algebra $\g$ with a nonzero $A_{BD}$-structure $\circ$, then $\varphi(x,y)=x\circ y$ obviously defines a nonzero symmetric biderivation  of $\g$ with  values in the adjoint module $M=\g.$

Let us then prove the converse. Let $\psi: \p \to {\mathfrak{gl}}(M)$ be a representation of a perfect Lie algebra $(\p, [ , ])$ and $\varphi=\p\times \p \to M$  a symmetric biderivation with values in $M$. Define the sets $\M_0=\varphi(\p,\p)$ and $\M_k=\psi(\p)\M_{k-1}$ for all $k\ge 1$ and let $V$ be the linear span of $\cup_{k\ge 0}\M_k$. It is clear that $V$ is a $\p$-submodule of $M$ because identity (\ref{phibider}) holds. Now, consider $\g=\p\oplus V$ endowed with the natural bracket
$$[x+v,x'+v']_\g=[x,x']+\psi(x)v'-\psi(x')v,$$
for all $x,x'\in\p$, $v,v'\in V$. It is well known that $\g$  with such a bracket becomes a Lie algebra. Since $\p$ is perfect, it is clear that $\g$ will also be perfect if and only if $\psi(\p)V=V$. But, by construction,  $\M_k=\psi(\p)\M_{k-1}\subset \psi(\p)V$ for all $k\ge 1$ and the fact that $\p$ is perfect and (\ref{phibider}) imply that $\M_0\subset \psi(\p)\M_0,$ showing that $V\subset \psi(\p)V$.

Finally, let us define a symmetric product  on  $\g$ by
$(x+v)\circ (x'+v')=\varphi(x,x')$
for any $x,x'\in\p$, $v,v'\in V$. Notice that $\circ$ is nonzero since $\varphi$ is so. Using (\ref{phibider}) again, we then have for $x,x',y\in\p$ and $v,v',w\in V$ that
\begin{eqnarray*}
& & (y+w)\circ [x+v,x'+v']_\g=\varphi(y,[x,x'])=\psi(x)\varphi(y,x')-\psi(x')\varphi(y,x)\\
& & [(y+w)\circ(x+v),x'+v']_\g+[x+v,(y+w)\circ(x'+v')]_\g=\\
& & \phantom{==}[\varphi(y,x),x'+v']_\g+[x+v,\varphi(y,x')]_\g=-\psi(x')\varphi(y,x)+\psi(x)\varphi(y,x').
\end{eqnarray*}
This shows that $\circ$ is an $A_{BD}$-structure on the perfect Lie algebra $\g$.\end{proof}

\section{Some generalities on $A_{BD}$-structures}

Let us first show that we can reduce our study to the  case of an algebraically closed field. The proof of the following lemma is straightforward.

\begin{lemma}\label{complexif} Let $\K$ be a field of characteristic zero and let $\overline{\K}$ be its algebraic closure. Let $\g$ be a  Lie algebra over $\K$ and let $\circ$ be an $A_{BD}$-structure on $\g$. If $\overline{g}=\g\otimes_{\K}\overline{\K}$, then the product $\bullet$ on $\overline{g}$ defined by $(x_1\otimes \zeta_1)\bullet (x_2\otimes \zeta_2)=x_1\circ x_2\otimes \zeta_1\zeta_2$, for all $x_1,x_2\in\g$, $\zeta_1,\zeta_2\in\overline{\K}$  provides an $A_{BD}$-structure on $\overline{\g}$.
\end{lemma}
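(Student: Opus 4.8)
The plan is to verify the defining identity of an $A_{BD}$-structure for $\bullet$ on $\overline{\g}$ by extending the known relations for $\circ$ on $\g$ linearly over $\overline{\K}$. Since $\overline{\g}=\g\otimes_{\K}\overline{\K}$ and every element of $\overline{\g}$ is a finite $\overline{\K}$-linear combination of simple tensors $x\otimes\zeta$ with $x\in\g$, both the Lie bracket and the product $\bullet$ are determined by their values on simple tensors once we check they extend to well-defined $\overline{\K}$-bilinear maps. The Lie bracket on $\overline{\g}$ is the standard extension of scalars of the bracket on $\g$, namely $[x_1\otimes\zeta_1,x_2\otimes\zeta_2]=[x_1,x_2]\otimes\zeta_1\zeta_2$, and it is a classical fact that this makes $\overline{\g}$ a Lie algebra over $\overline{\K}$. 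The product $\bullet$ is defined analogously, and its commutativity is immediate from the commutativity of $\circ$ together with the commutativity of multiplication in $\overline{\K}$.

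The one substantive point is the $A_{BD}$-identity $u\bullet[v,w]=[u\bullet v,w]+[v,u\bullet w]$ for all $u,v,w\in\overline{\g}$. By $\overline{\K}$-trilinearity of both sides it suffices to check this on simple tensors $u=x\otimes\zeta$, $v=y\otimes\eta$, $w=z\otimes\xi$ with $x,y,z\in\g$ and $\zeta,\eta,\xi\in\overline{\K}$. First I would compute the left-hand side:
\begin{equation*}
(x\otimes\zeta)\bullet[y\otimes\eta,z\otimes\xi]=(x\otimes\zeta)\bullet([y,z]\otimes\eta\xi)=(x\circ[y,z])\otimes\zeta\eta\xi.
\end{equation*}
Then I would compute the right-hand side similarly, obtaining
\begin{equation*}
[(x\circ y)\otimes\zeta\eta,z\otimes\xi]+[y\otimes\eta,(x\circ z)\otimes\zeta\xi]=\bigl([x\circ y,z]+[y,x\circ z]\bigr)\otimes\zeta\eta\xi.
\end{equation*}
Comparing the two expressions, the identity for $\bullet$ reduces to $\bigl(x\circ[y,z]\bigr)\otimes\zeta\eta\xi=\bigl([x\circ y,z]+[y,x\circ z]\bigr)\otimes\zeta\eta\xi$, which holds precisely because $\circ$ is an $A_{BD}$-structure on $\g$, i.e. $x\circ[y,z]=[x\circ y,z]+[y,x\circ z]$ in $\g$.

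There is no real obstacle here, which is why the authors call the proof straightforward; the only care required is the routine verification that $\bullet$ is \emph{well-defined} on the tensor product (balanced over $\K$) so that the formula on simple tensors extends unambiguously to all of $\overline{\g}$. This follows from the fact that $\circ$ is $\K$-bilinear, so the assignment $(x_1,\zeta_1,x_2,\zeta_2)\mapsto (x_1\circ x_2)\otimes\zeta_1\zeta_2$ is $\K$-balanced in each tensor factor and descends to a well-defined $\overline{\K}$-bilinear map on $\overline{\g}\times\overline{\g}$. Once well-definedness is granted, the displayed computations above complete the verification.
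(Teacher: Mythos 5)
Your proof is correct: the paper itself gives no argument for this lemma, merely declaring it straightforward, and your verification (checking the $A_{BD}$-identity on simple tensors, extending by $\overline{\K}$-multilinearity, and noting well-definedness of $\bullet$ via $\K$-bilinearity of $\circ$) is precisely the routine computation the authors intend. Nothing is missing.
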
 

\smallskip

 Let us consider a    Lie algebra with Levi decomposition $\mathfrak{g}=\mathfrak{s}\ltimes_\phi R(\mathfrak{g})$, where $R(\g)$ stands for the radical of $\g$, and   $\phi: \mathfrak{s} \to R(\mathfrak{g})$ is the representation of $\mathfrak{s}$ on $R(\mathfrak{g})$ defined by $\phi(a)= \ad(a)_{|R(\mathfrak{g}},$ for all $a \in \mathfrak{s}$. Suppose that $\g$ is endowed with an $A_{BD}$-structure $\circ$. This means that, for all $x$ element of $\g$, the multiplication $L_x: \g \to \g$ defined  by $L_x(y)= x\circ y,$ for all $y \in \g$, is a derivation of $\g$. Moreover, by Proposition 4.5 of \cite{BO},  $L_x(\g)$ is contained in $R(\mathfrak{g})$, for all $x\in \g.$ Thus, each map $L_x$ lies in the set $\mbox{Der}(\mathfrak{g}, R(\mathfrak{g}))$ of derivations of $\g$ with values in $R(\mathfrak{g})$. Using Lemma 5.7 of  \cite{BuDe}, the set $\mbox{Der}(\mathfrak{g}, R(\mathfrak{g}))$ can be characterized 
as follows:

\begin{lemma}\label{lemma21} Let $\g=\mathfrak{s}\ltimes_\phi R(\mathfrak{g})$ be the Levi decomposition of a Lie algebra $\g$ and
	let $D: \mathfrak{g}\to R(\mathfrak{g})$ be a linear map. Then $D\in Der(\mathfrak{g}, R(\mathfrak{g}))$ if and only if there exist two linear maps $d: R(\mathfrak{g}) \to R(\mathfrak{g})$ and $f: \mathfrak{s} \to R(\mathfrak{g})$ such that
	\begin{enumerate}
		\item[{\rm (1)}] $D(a+r)=f(a)+d(r),$
		\item[{\rm (2)}]  $d([r,r'])=[d(r),r']+[r,d(r')]$, 
		\item[{\rm (3)}] $f([a,a'])=\phi(a)f(a')-\phi(a')f(a)$, 
		\item[{\rm (4)}] $d\phi(a)(r)=\phi(a)d(r)+[f(a),r]$,
	\end{enumerate}
	for all $a,a' \in \mathfrak{s}, \, r,r' \in R(\mathfrak{g})$.
\end{lemma}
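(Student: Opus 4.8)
The plan is to characterize the linear maps $D:\g\to R(\g)$ that are derivations by splitting $D$ according to the Levi decomposition $\g=\s\ltimes_\phi R(\g)$. Since any $x\in\g$ writes uniquely as $x=a+r$ with $a\in\s$, $r\in R(\g)$, and $D$ is linear, I can set $d:=D|_{R(\g)}$ and $f:=D|_\s$; then item (1), namely $D(a+r)=f(a)+d(r)$, is automatic. So the content of the lemma is the equivalence between the single derivation condition $D([x,y])=[D(x),y]+[x,D(y)]$ for all $x,y\in\g$ and the system (2)--(4) for the restrictions $d,f$.

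For the forward direction I would evaluate the derivation identity on the three types of pairs coming from the direct-sum decomposition of $\g$ as a vector space. First, taking $x=r$, $y=r'$ both in $R(\g)$ and using that $R(\g)$ is a subalgebra (so $[r,r']\in R(\g)$ and $D([r,r'])=d([r,r'])$), the identity collapses to (2), i.e. $d$ is a derivation of $R(\g)$. Second, taking $x=a$, $y=a'$ both in $\s$: here $[a,a']\in\s$ since $\s$ is a subalgebra, so the left side is $f([a,a'])$, while the right side is $[f(a),a']+[a,f(a')]=-[a',f(a)]+[a,f(a')]=\phi(a)f(a')-\phi(a')f(a)$, using that $f(a),f(a')\in R(\g)$ and the definition $\phi(a)=\ad(a)|_{R(\g)}$; this yields (3). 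Third, the mixed pair $x=a\in\s$, $y=r\in R(\g)$: now $[a,r]=\phi(a)(r)\in R(\g)$ so the left side is $d(\phi(a)(r))$, and the right side is $[f(a),r]+[a,d(r)]=[f(a),r]+\phi(a)d(r)$ (again $d(r)\in R(\g)$), giving exactly (4). Since these three families of pairs span $\g\times\g$ by bilinearity, the derivation identity on all of $\g$ is equivalent to (2), (3), (4) holding simultaneously.

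For the converse I would simply run this computation in reverse: assume (2)--(4), write arbitrary $x=a+r$, $y=a'+r'$, expand $[x,y]$ using the semidirect bracket $[a+r,a'+r']=[a,a']+\phi(a)r'-\phi(a')r+[r,r']$, apply $D$ linearly, and check term by term that $D([x,y])=[D(x),y]+[x,D(y)]$. The bookkeeping splits the right-hand side into the $\s\times\s$, $\s\times R(\g)$, $R(\g)\times\s$ and $R(\g)\times R(\g)$ contributions, each of which is matched by one of (2), (3), (4) (with the two mixed pieces both governed by (4)). There is no genuine obstacle here; the only care needed is the faithful use of $\phi(a)=\ad(a)|_{R(\g)}$ and of the fact that $L_x$, hence $D$, takes values in $R(\g)$, so that every bracket $[a,f(a')]$, $[f(a),r]$, etc.\ can be rewritten via $\phi$ and stays inside $R(\g)$. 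Since the excerpt itself notes that this is Lemma 5.7 of \cite{BuDe} and labels the statement straightforward, I expect the proof to be this direct verification; the mildly delicate bookkeeping step is making sure the mixed brackets are assigned to the correct defining relation, which is precisely where identity (4) does double duty.
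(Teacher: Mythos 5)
Your proof is correct and complete: splitting $D$ into $f=D|_{\mathfrak{s}}$ and $d=D|_{R(\mathfrak{g})}$, testing the derivation identity on pairs from $\mathfrak{s}\times\mathfrak{s}$, $\mathfrak{s}\times R(\mathfrak{g})$ and $R(\mathfrak{g})\times R(\mathfrak{g})$ (the case $R(\mathfrak{g})\times\mathfrak{s}$ following by antisymmetry of both sides), and reversing the computation for the converse is exactly the standard argument. Note that the paper gives no proof of this lemma at all --- it is quoted from Lemma 5.7 of \cite{BuDe} --- so your verification simply supplies the omitted details; also, the phrase ``the proof of the following lemma is straightforward'' in the paper refers to the preceding complexification lemma (Lemma \ref{complexif}), not to this one, though the label would be equally deserved here.
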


 \smallskip

\begin{remark} \label{rema1} {\em Notice that condition (2) means that $d$ is a derivation of $R(\g)$ and condition (3) is equivalent to $f\in Z^1(\mathfrak{s},R(\g))$. Thus, we can apply the first Whitehead's lemma \cite{jacob} to the cocycle $f$ to assure the existence of a vector $r_f\in R(\g)$ such that $f(a)=\phi(a)r_f$ for all $a\in {\mathfrak s}$.}
\end{remark}

\smallskip

All through the paper the notation $\g=\mathfrak{s}\ltimes_\phi R(\mathfrak{g})$ will stand for a Levi decomposition of the Lie algebra $\g$ over an algebraically closed field $\K$. Notice that if $\g$ admits a nontrivial $A_{BD}$-structure, then $R(\g)\ne\{0\}$ since semisimple Lie algebras do not admit such structures \cite[Prop. 4.3]{BO}.

\smallskip

\begin{proposition}\label{proposition21}
	Let $\mathfrak{g}=\mathfrak{s}\ltimes_\phi R(\mathfrak{g})$ be a  Lie algebra. A product   $\circ$ on $\g$ defines an $A_{BD}$-structure if and only if there exist  linear maps $F:\mathfrak{s}\to R(\mathfrak{g})$, $G:R(\mathfrak{g})\to R(\mathfrak{g})$ and $\Delta:R(\mathfrak{g})\to \mbox{\rm Der}(R(\mathfrak{g}))$ such that $$(a+r)\circ (a'+r')=\phi(a)Fa'+\phi(a)Gr'+\phi(a')Gr+\Delta(r)(r'),$$
	where  $F, G$ and $\Delta$ verify
	\begin{enumerate}
		\item[{\rm (1)}] $\phi(a)Fa'=\phi(a')Fa,$
		\item[{\rm (2)}] $\phi(a)G\in\mbox{\rm Der}(R(\mathfrak{g}))$,
									\item[{\rm (3)}] $\Delta(r)(r')=\Delta(r')(r),$
									\item[{\rm (4)}] $\phi(a)G\phi(a')(r')=\phi(a')\phi(a)G(r')+[\phi(a)Fa',r'],$
								\item[{\rm (5)}] $\Delta(r)\phi(a')(r')=\phi(a')\Delta (r)(r')+[\phi(a')G(r),r'],$
	\end{enumerate}
	for all $a,a'\in \mathfrak{s}$, $r,r' \in R(\mathfrak{g})$.
\end{proposition}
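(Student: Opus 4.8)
The plan is to prove both implications by analysing the left multiplications $L_x(y)=x\circ y$ blockwise along the Levi decomposition. Writing $x=a+r$ and $y=a'+r'$ with $a,a'\in\s$ and $r,r'\in R(\g)$, bilinearity of $\circ$ reduces everything to the four products $a\circ a'$, $a\circ r'$, $r\circ a'$, $r\circ r'$, each of which lies in $R(\g)$ by Proposition 4.5 of \cite{BO} (recalled just before Lemma \ref{lemma21}). I would also fix once and for all a linear section $\sigma$ of the coboundary map $R(\g)\to Z^1(\s,R(\g))$, $v\mapsto(a\mapsto\phi(a)v)$; this map is onto by the first Whitehead lemma (Remark \ref{rema1}), and $\sigma$ is the device that turns pointwise Whitehead representatives into genuinely linear maps.

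For necessity, assume $\circ$ is an $A_{BD}$-structure. Then every $L_x$, in particular $L_a$ and $L_r$, is a derivation of $\g$ with values in $R(\g)$, so Lemma \ref{lemma21} applies: $L_a$ splits into a cocycle $f_a=L_a|_\s\in Z^1(\s,R(\g))$ and a derivation $d_a=L_a|_{R(\g)}\in\mbox{Der}(R(\g))$, and similarly for $L_r$. I would then set $F(a')=\sigma(f_{a'})$ and $G(r')=\sigma(f_{r'})$, which are linear because $a'\mapsto f_{a'}$ and $r'\mapsto f_{r'}$ are, and which satisfy $\phi(a)F(a')=f_{a'}(a)=a'\circ a=a\circ a'$ and $\phi(a)G(r')=f_{r'}(a)=r'\circ a=a\circ r'$; and I would put $\Delta(r)=d_r$, linear in $r$ with values in $\mbox{Der}(R(\g))$. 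Symmetry of $\circ$ gives $a\circ a'=a'\circ a$ and $r\circ r'=r'\circ r$, i.e. (1) and (3), and it identifies $r\circ a'=\phi(a')G(r)$ and forces $d_a=\phi(a)G$; bilinearity then yields the displayed formula. Condition (2) is exactly the assertion that $d_a=\phi(a)G$ is a derivation of $R(\g)$, i.e. Lemma \ref{lemma21}(2) for $L_a$, and conditions (4) and (5) are Lemma \ref{lemma21}(4) written out for $L_a$ and for $L_r$ after the substitutions $f_a(a')=\phi(a)F(a')$, $d_a=\phi(a)G$, $f_r(a')=\phi(a')G(r)$ and $d_r=\Delta(r)$.

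For sufficiency, I would define $\circ$ by the displayed formula and verify the two defining properties of an $A_{BD}$-structure. Commutativity is immediate from (1) and (3), the two $G$-terms being symmetric by construction; and by bilinearity it suffices to show that $L_a$ and $L_r$ are derivations with values in $R(\g)$, which I would obtain from the converse direction of Lemma \ref{lemma21}. For $L_a$ the relevant data are $f(a')=\phi(a)F(a')$ and $d=\phi(a)G$: item (2) of Lemma \ref{lemma21} is our (2), item (3) holds because (1) rewrites $\phi(a)F(a')$ as the coboundary $\phi(a')F(a)$ of $F(a)$, hence a cocycle, and item (4) is our (4). For $L_r$ the data are $f(a')=\phi(a')G(r)$, a coboundary of $G(r)$ and so a cocycle, together with $d=\Delta(r)\in\mbox{Der}(R(\g))$, and item (4) of Lemma \ref{lemma21} is our (5). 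Hence $L_a,L_r\in\mbox{Der}(\g,R(\g))$, so every $L_x$ is such a derivation and $\circ$ is an $A_{BD}$-structure.

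The step I expect to demand the most care is the necessity direction, for two reasons: first, producing honestly linear maps $F$ and $G$ rather than merely choosing Whitehead representatives pointwise (each representative is determined only modulo the invariants $R(\g)^{\s}$), which is precisely why I fix the linear section $\sigma$ at the outset; and second, the careful use of symmetry to see that one single map $G$ must govern both the $\s\times R(\g)$ and the $R(\g)\times\s$ blocks and that the derivation $d_a$ is forced to equal $\phi(a)G$. Once these identifications are in place, matching our five conditions against the three conditions of Lemma \ref{lemma21} applied to $L_a$ and $L_r$ is routine.
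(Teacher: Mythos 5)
Your proof is correct and follows essentially the same route as the paper's: decompose $L_a$ and $L_r$ via Lemma \ref{lemma21}, use Whitehead's lemma to produce $F$ and $G$, exploit symmetry of $\circ$ to force $d_a=\phi(a)G$, and then match conditions (1)--(5) against the conditions of Lemma \ref{lemma21} in both directions. Your one departure --- fixing a linear section $\sigma$ of the coboundary map $R(\g)\to Z^1(\s,R(\g))$ so that $F$ and $G$ are honestly linear rather than chosen pointwise modulo $R(\g)^{\s}$ --- is a welcome sharpening of a point the paper's proof passes over silently when it sets $F(a)=v_{f_a}$ and $G(r)=v_{f_r}$, not a different argument.
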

\begin{proof}
	Suppose that $\circ$ is an $A_{BD}$-structure. Let us consider $x \in \mathfrak{g}$. Since $L_x \in \mbox{\rm Der}(\mathfrak{g},R(\mathfrak{g}))$, by Lemma \ref{lemma21}, there exist a linear map $f_x:\mathfrak{s}\to R(\mathfrak{g})$ and a derivation $d_x:R(\mathfrak{g}) \to R(\mathfrak{g})$ such that $L_x(a+r)=f_x(a)+d_x(r)$, verifying $f_x([a,a'])=\phi(a)f_x(a')-\phi(a')f_x(a)$ and $d_x\phi(a)(r)=\phi(a)d_x(r)+[f_x(a),r]$, for all $a,a'\in \mathfrak{s}$, $r\in R(\mathfrak{g})$.
	Using Whitehead's Lemma as in Remark \ref{rema1}, we see that there exists $v_{f_x}\in R(\mathfrak{g})$ such that $f_x(a)=\phi(a)v_{f_x}$ for all $a\in {\mathfrak s}$. Let us then define $F:\s\to R(\g)$ by $F(a)=v_{f_a}$ for $a\in\s$ and $G:R(\g)\to R(\g)$ by $G(r)=v_{f_r}$ whenever $r\in R(\g)$.

One also has  $d_a(r)=L_a(r)=L_r(a)=f_r(a)=\phi(a)G(r)$, for all $a\in {\mathfrak s}$, $r\in R(\g)$, and, as a result, $L_a(a'+r')=\phi(a)F(a')+\phi(a)G(r')$ for all $a,a'\in \mathfrak{s}$, $r'\in R(\mathfrak{g})$. Clearly, the fact that $d_a$ is a derivation of $R(\g)$ proves (2). Besides, if we define define $\Delta(r)=d_r\in \mbox{\rm Der}(R(\mathfrak{g}))$ for all $r\in R(\g)$, we have $L_r(a'+r')=L_r(a')+\Delta(r)(r')=\phi(a')Gr+\Delta(r)(r')$ and,  hence
$$(a+r)\circ (a'+r')=\phi(a)Fa'+\phi(a)Gr'+\phi(a')Gr+\Delta(r)(r'),$$	
for all $a,a'\in \mathfrak{s}$, $r,r'\in R(\mathfrak{g})$, as claimed.
Notice that the commutativity of $\circ$ clearly implies assertions (1) and (3). 

	Finally recall that we had  $d_x\phi(a')(r')=\phi(a')d_x(r')+[f_x(a'),r']$, for all $x\in\g$,  $a'\in \mathfrak{s}$, $r'\in R(\mathfrak{g})$. When $x=a\in\s$ we  then get
	$\phi(a')G\phi(a)(r')=\phi(a')\phi(a)G(r')+[\phi(a)Fa',r'],$ proving identity (4). Taking $x=r\in R(\g)$ we obtain $\Delta(r)\phi(a')(r')=\phi(a')\Delta(r)(r')+[\phi(a')G(r),r']$, which proves (5).
	
	For the converse, if $F, G$ and $\Delta$ verifying equalities (1) through (5) are given and we define $(a+r)\circ (a'+r')=\phi(a)Fa'+\phi(a)Gr'+\phi(a')Gr+\Delta(r)(r'),$ for all $a,a'\in \mathfrak{s}, \, r \in R(\mathfrak{g})$, it is a simple calculation to see that $\circ$ is commutative and that the maps $f_{a+r}:\s\to R(\g)$ and $d_{a+r}: R(\g)\to R(\g)$ defined by $f_{a+r}(a')=\phi(a)Fa'+\phi(a')Gr$, $d_{a+r}(r')=\phi(a)Gr'+\Delta(r)(r')$, for all $a,a'\in \mathfrak{s}$, $r,r' \in R(\mathfrak{g})$ verify the conditions (2) through (4) of Lemma \ref{lemma21}. This shows that $L_{a+r}$ is a derivation of the Lie algebra $\g$, for any $a\in \mathfrak{s}$, $r\in R(\mathfrak{g})$, which proves that $\circ$ defines an $A_{BD}$-structure.
\end{proof}

\smallskip

As we mentioned in the introduction, the case with abelian radical will be important in our study. In that case we immediately deduce form Proposition \ref{proposition21} the following characterization of $A_{BD}$-structures:

\smallskip

\begin{corollary}\label{corol21}
	Let $\mathfrak{g}=\mathfrak{s}\ltimes_\phi R(\mathfrak{g})$ be  a  Lie algebra with $ R(\mathfrak{g})$ abelian. A product   $\circ$ on $\g$ defines an $A_{BD}$-structure if and only if there exist  linear maps $F:\mathfrak{s}\to R(\mathfrak{g})$, $G:R(\mathfrak{g})\to R(\mathfrak{g})$ and $\Delta:R(\mathfrak{g})\to {\mathfrak{gl}}(R(\mathfrak{g}))$ such that $$(a+r)\circ (a'+r')=\phi(a)Fa'+\phi(a)Gr'+\phi(a')Gr+\Delta(r)(r'),$$
	where  $F, G$ and $\Delta$ verify
	\begin{enumerate}
		\item[{\rm (1)}] $\phi(a)Fa'=\phi(a')Fa,$
		\item[{\rm (2)}] $\Delta(r)(r')=\Delta(r')(r),$
		\item[{\rm (3)}] $\phi(a)G\phi(a')(r)=\phi(a')\phi(a)G(r),$
		\item[{\rm (4)}] $\Delta(r)\phi(a)(r')=\phi(a)\Delta (r)(r'),$
	\end{enumerate}
	for all $a,a'\in \mathfrak{s}$, $r,r' \in R(\mathfrak{g})$.
\end{corollary}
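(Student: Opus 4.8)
The plan is to obtain this statement as a direct specialization of Proposition \ref{proposition21} to the setting where $R(\g)$ is abelian; no new structure theory is needed, only a careful inspection of how each of the five conditions there degenerates. First I would record the single structural fact that drives everything: when $R(\g)$ is abelian, its internal bracket vanishes identically. Two immediate consequences follow. On the one hand, the defining identity $d([r,r'])=[d(r),r']+[r,d(r')]$ for a derivation of $R(\g)$ reduces to $0=0$, so that $\mbox{Der}(R(\g))={\mathfrak{gl}}(R(\g))$; this is exactly why the target space of $\Delta$ may be enlarged from $\mbox{Der}(R(\g))$ to ${\mathfrak{gl}}(R(\g))$, and why condition (2) of Proposition \ref{proposition21}, namely $\phi(a)G\in\mbox{Der}(R(\g))$, becomes vacuous and can simply be dropped. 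On the other hand, every commutator of two elements of $R(\g)$ is zero.

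With these observations in hand, I would go through the conditions of Proposition \ref{proposition21} one at a time. Conditions (1) and (3) there involve only the maps $F,G,\Delta$ and no bracket on $R(\g)$, so they transcribe verbatim into conditions (1) and (2) of the corollary. For condition (4), the term $[\phi(a)Fa',r']$ is the bracket of $\phi(a)Fa'\in R(\g)$ with $r'\in R(\g)$ and hence vanishes; what remains is precisely $\phi(a)G\phi(a')(r')=\phi(a')\phi(a)G(r')$, which is condition (3) of the corollary after the harmless renaming $r'\mapsto r$. Likewise, in condition (5) the term $[\phi(a')G(r),r']$ is again a bracket within the abelian radical and drops out, leaving $\Delta(r)\phi(a')(r')=\phi(a')\Delta(r)(r')$, which is condition (4) of the corollary after renaming $a'\mapsto a$.

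Finally I would note that the product formula itself is unchanged: the expression $(a+r)\circ(a'+r')=\phi(a)Fa'+\phi(a)Gr'+\phi(a')Gr+\Delta(r)(r')$ is identical in both statements, and each of its summands lies in $R(\g)$ as required. Thus the characterization of Proposition \ref{proposition21} specializes exactly to the four stated conditions, and conversely any $F,G,\Delta$ meeting conditions (1)--(4) of the corollary automatically satisfy conditions (1)--(5) of the proposition, since the now-trivial condition (2) holds automatically and the vanishing bracket terms reinsert themselves as zero. Because the equivalence in Proposition \ref{proposition21} is an \emph{if and only if}, this back-and-forth is immediate and there is no genuine obstacle; the only point demanding care is the bookkeeping that matches each proposition condition with its corollary counterpart and confirms that abelianness is invoked in exactly the two places indicated.
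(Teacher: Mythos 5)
Your proof is correct and follows exactly the route the paper intends: the paper offers no written proof beyond stating that the corollary is ``immediately deduced'' from Proposition \ref{proposition21}, and your specialization---noting that $\mathrm{Der}(R(\mathfrak{g}))={\mathfrak{gl}}(R(\mathfrak{g}))$ when the radical is abelian, so condition (2) of the proposition is vacuous, and that the bracket terms in conditions (4) and (5) vanish---is precisely that deduction carried out in detail, in both directions of the equivalence.
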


\smallskip

\begin{remark}\label{rema22} {\em Let us notice that there are three interesting particular cases of the situation described in Corollary \ref{corol21}. For a Lie algebra $\mathfrak{g}=\mathfrak{s}\ltimes_\phi R(\mathfrak{g})$ with abelian radical with the notations above, we have:
\begin{enumerate}
\item[(1)] If we take $F\ne 0$ such that $\phi(a)Fa'=\phi(a')Fa,$ for all $a,a'\in\s$ and there exists $a_0\in\s$ with $\phi(a_0)F\ne 0$, this is to say $F(\s)\not\subset R(\g)^\s$, then the map
$(a+r)\circ (a'+r')=\phi(a)Fa'$, for all $a,a'\in \mathfrak{s}$, $r,r' \in R(\mathfrak{g})$, is a nontrivial $A_{BD}$-structure on $\g$.
\item[(2)] If we take $G\ne 0$ such that $\phi(a)G\phi(a')(r)=\phi(a')\phi(a)G(r),$ for all $a,a'\in\s$, $r\in R(\g)$ and $G(R(\g))\not\subset R(\g)^\s$, then the map
$(a+r)\circ (a'+r')=\phi(a)Gr'+\phi(a')Gr$, for all $a,a'\in \mathfrak{s}$, $r,r' \in R(\mathfrak{g})$, is a nontrivial $A_{BD}$-structure on $\g$.
 \item[(3)] If we take $\Delta\ne 0$ such that $\Delta(r)(r')=\Delta(r')(r)$ and $\Delta(r)\phi(a)(r')=\phi(a)\Delta (r)(r'),$ for all $a\in\s$, $r,r'\in R(\g)$  then the map
$(a+r)\circ (a'+r')=\Delta(r)(r')$, for all $a,a'\in \mathfrak{s}$, $r,r' \in R(\mathfrak{g})$, is a nontrivial $A_{BD}$-structure on $\g$.
\end{enumerate}
}
\end{remark}

\section{Reduction to the case of abelian radical}\label{reduc}

We will devote this section to prove that if an arbitrary perfect Lie algebra admits a nonzero $A_{BD}$-structure, then one can find a perfect Lie algebra with abelian radical also admitting a nontrivial $A_{BD}$-structure.

\begin{proposition}\label{proposition22}
	Suppose that a perfect Lie algebra $\mathfrak{g}=\mathfrak{s}\ltimes_\phi R(\mathfrak{g})$ with non-abelian radical admits a nontrivial $A_{BD}$-structure. If $R(\mathfrak{g})$ does not have $1$-dimensional $\mathfrak{s}$-submodules, then the Lie algebra with abelian radical $\tilde{\mathfrak{g}}=\mathfrak{s} \ltimes_\phi \mathfrak{a},$ where $\mathfrak{a}= R(\mathfrak{g})$ as vector spaces, is perfect and also admits a nontrivial $A_{BD}$-structure.
\end{proposition}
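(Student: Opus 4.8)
The plan is to keep the Levi factor $\mathfrak{s}$ and the representation $\phi$ fixed, abelianize the radical, and transport to $\tilde{\mathfrak{g}}$ only a suitable \emph{pure} piece of the given $A_{BD}$-structure rather than the whole of it. First I would establish that $\tilde{\mathfrak{g}}$ is perfect. Since $\mathfrak{s}$ is semisimple and $\mathrm{char}\,\K=0$, Weyl's theorem decomposes $R(\mathfrak{g})$ into irreducible $\mathfrak{s}$-submodules under $\phi$. A $1$-dimensional $\mathfrak{s}$-submodule is necessarily trivial (the action factors through $\mathfrak{s}/[\mathfrak{s},\mathfrak{s}]=0$) and, conversely, a nonzero $\mathfrak{s}$-invariant vector spans one; hence the hypothesis that $R(\mathfrak{g})$ has no $1$-dimensional $\mathfrak{s}$-submodule is equivalent to $R(\mathfrak{g})^{\mathfrak{s}}=\{0\}$, i.e. every irreducible summand is nontrivial. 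For a nontrivial irreducible summand $W$, the submodule $\phi(\mathfrak{s})W$ is nonzero, so $\phi(\mathfrak{s})W=W$; summing over the summands gives $\phi(\mathfrak{s})R(\mathfrak{g})=R(\mathfrak{g})$. Since $\mathfrak{a}=R(\mathfrak{g})$ as a vector space with the same $\phi$ and with abelian bracket, $[\tilde{\mathfrak{g}},\tilde{\mathfrak{g}}]=[\mathfrak{s},\mathfrak{s}]+\phi(\mathfrak{s})\mathfrak{a}+[\mathfrak{a},\mathfrak{a}]=\mathfrak{s}+\mathfrak{a}=\tilde{\mathfrak{g}}$, so $\tilde{\mathfrak{g}}$ is perfect.

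Next I would produce the $A_{BD}$-structure on $\tilde{\mathfrak{g}}$. By Proposition \ref{proposition21} the given structure $\circ$ on $\mathfrak{g}$ is encoded by maps $F,G,\Delta$ satisfying (1)--(5); since $\circ\ne 0$ and $R(\mathfrak{g})^{\mathfrak{s}}=\{0\}$, at least one of $F,G,\Delta$ is nonzero. I would then branch according to the first of these maps that is nonzero, transporting to $\tilde{\mathfrak{g}}$ the corresponding particular structure of Remark \ref{rema22}: if $F\ne 0$, take $(a+r)\tilde{\circ}(a'+r')=\phi(a)Fa'$; if $F=0$ but $G\ne 0$, take $\phi(a)Gr'+\phi(a')Gr$; and if $F=G=0$ (so $\Delta\ne 0$), take $\Delta(r)(r')$, now viewing each $\Delta(r)$ merely as an element of $\mathfrak{gl}(\mathfrak{a})$. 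In each case the relevant symmetry conditions pass directly from Proposition \ref{proposition21} to the commutativity conditions of Corollary \ref{corol21}, and nontriviality over $\tilde{\mathfrak{g}}$ holds because $R(\mathfrak{g})^{\mathfrak{s}}=\{0\}$ forces $F(\mathfrak{s})\not\subset R(\mathfrak{g})^{\mathfrak{s}}$ (resp. $G(R(\mathfrak{g}))\not\subset R(\mathfrak{g})^{\mathfrak{s}}$, resp. $\Delta\ne 0$), so Remark \ref{rema22} applies verbatim to the abelian-radical algebra $\tilde{\mathfrak{g}}$.

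The point that requires care, and the reason for the case split, is that conditions (4) and (5) of Proposition \ref{proposition21} carry the bracket terms $[\phi(a)Fa',r']$ and $[\phi(a')Gr,r']$, which are present in the non-abelian radical $R(\mathfrak{g})$ but vanish in $\mathfrak{a}$, whereas conditions (3) and (4) of Corollary \ref{corol21} are bracket-free. These two families of identities do \emph{not} match for the full triple $(F,G,\Delta)$, so one cannot transport $\circ$ wholesale. The branching resolves exactly this: in the $G$-case the hypothesis $F=0$ kills the bracket in (4), reducing it to Corollary \ref{corol21}(3); in the $\Delta$-case the hypothesis $G=0$ kills the bracket in (5), reducing it to Corollary \ref{corol21}(4); and in the $F$-case neither (4) nor (5) is needed. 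I expect this reconciliation of the obstructing bracket terms to be the only genuine difficulty, the remaining verifications being the routine checks already packaged in Remark \ref{rema22}.
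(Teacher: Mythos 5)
Your proposal is correct and follows essentially the same route as the paper's own proof: encode $\circ$ via the maps $F,G,\Delta$ of Proposition \ref{proposition21}, split into the cases $F\ne 0$; $F=0,\ G\ne 0$; and $F=G=0,\ \Delta\ne 0$, and transport the corresponding pure structure of Remark \ref{rema22} to $\tilde{\mathfrak{g}}$, noting that the absence of $1$-dimensional submodules gives $R(\mathfrak{g})^{\mathfrak{s}}=\{0\}$ (hence perfectness of $\tilde{\mathfrak{g}}$ and the nontriviality conditions) and that the vanishing of $F$ (resp.\ $G$) kills the bracket term in condition (4) (resp.\ (5)). Your explicit identification of those bracket terms as the obstruction requiring the case split is exactly the mechanism implicit in the paper's argument.
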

\begin{proof}
	The $A_{BD}$-structure on $\mathfrak{g} = \mathfrak{s} \ltimes_{\phi} R(\mathfrak{g})$, We know that there exist linear maps $F : \mathfrak{s} \to R(\g)$, $G : R(\g) \to R(\g)$ and $\Delta: R(\g)\to \mbox{Der}(R(\g))$  verifying (1)-(5) in Proposition \ref{proposition21} such that the $A_{BD}$ product  is given by:
$$
		(a + r) \circ (a' + r') = \phi(a)F(a') + \phi(a)Gr'  + \phi(a')Gr +  \Delta(r)(r'), \quad \forall a, a' \in \mathfrak{s}, \ r, r' \in R(\mathfrak{g}),
$$
		
	Notice that $\tilde{\mathfrak{g}}$ is also perfect because  $R(\mathfrak{g})$ does not admit 1-dimensional $\mathfrak{s}$-submodules. If the above map $F$  is nonzero, then   by (1) in Remark \ref{rema22} we obtain a nontrivial $A_{BD}$-structure on $\tilde{\mathfrak{g}}.$ Notice that the condition $F(\s)\not\subset R(\g)^\s$ trivially holds because   $R(\mathfrak{g})$ does not have $1$-dimensional $\mathfrak{s}$-submodules.
	
 Suppose now that $F = 0$ but $G\ne 0$. One has again $G(R(\g))\not\subset R(\g)^\s=\{0\}$ and condition (4) in Proposition \ref{proposition21} now reads
$\phi(a)G\phi(a')(r)=\phi(a')\phi(a)G(r)$, showing that $(a+r)\circ (a'+r')=\phi(a)Gr'+\phi(a')Gr$, for all $a,a'\in \mathfrak{s}$, $r,r' \in R(\mathfrak{g})$, provides a nontrivial $A_{BD}$-structure on $\g$, as in the second case of Remark \ref{rema22}.

Finally, if $F=0$ and $G=0$, we must have $\Delta\ne 0$. It is clear that condition $\Delta(r)(r')=\Delta(r')(r)$ holds for all $r,r'\in R(\g)$ and, since $G=0$, the condition (5) of Proposition \ref{proposition21} gives $\phi(a)\Delta(r) = \Delta(r)\phi(a)$, for all $a,a'\in \mathfrak{s}$, $r\in R(\g)$. Thus the product $\circ$ is also an $A_{BD}$-structure on $\tilde\g$. 	
\end{proof}

\medskip

\begin{proposition}\label{proposition23}
	Let $\mathfrak{g}=\mathfrak{s}\ltimes_\phi R(\mathfrak{g})$ be a perfect Lie algebra with  a non-trivial $A_{BD}$-structure $\circ$. 
	Denote $\mathfrak{n}_1=[\mathfrak{s}, R(\mathfrak{g})]$ and $\mathfrak{n}_2=R(\mathfrak{g})^\mathfrak{s}$ and  put $\phi_1=\phi_{\vert_{\mathfrak{n}_1}}$. 
	
	If either $\s\circ\s\ne\{0\}$ or $\s\circ\mathfrak{n}_1\ne\{0\}$, then the Lie algebra $\tilde{\mathfrak{g}}=\mathfrak{s}\ltimes_{\phi_1} R(\tilde{\mathfrak{g}})$ with abelian radical  $R(\tilde{\mathfrak{g}})$, where $R(\tilde{\mathfrak{g}})=\mathfrak{n}_1$ as vector spaces, is perfect and also admits a non-trivial $A_{BD}$-structure.
\end{proposition}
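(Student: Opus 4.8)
The plan is to transport the relevant part of the $A_{BD}$-structure on $\g$ to $\tilde{\g}$ through the $\s$-module structure of the radical, splitting according to which of the two hypotheses holds. The key preliminary observation is that, since $\s$ is semisimple and we work in characteristic zero over an algebraically closed field, $R(\g)$ is a completely reducible $\s$-module, so $R(\g)=\mathfrak{n}_1\oplus\mathfrak{n}_2$, with $\mathfrak{n}_1=[\s,R(\g)]$ the sum of the nontrivial isotypic components and $\mathfrak{n}_2=R(\g)^\s$ the trivial part. In particular $\mathfrak{n}_1^\s=\{0\}$ and $[\s,\mathfrak{n}_1]=\mathfrak{n}_1$, which immediately gives that $\tilde{\g}=\s\ltimes_{\phi_1}\mathfrak{n}_1$ is perfect: its derived algebra contains $[\s,\s]=\s$ and $[\s,\mathfrak{n}_1]=\mathfrak{n}_1$. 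Let $\pi:R(\g)\to\mathfrak{n}_1$ be the projection with kernel $\mathfrak{n}_2$; since $\mathfrak{n}_2\subset\ker\phi(a)$ for every $a\in\s$, the identity $\phi(a)\pi=\phi(a)$ holds on all of $R(\g)$, and this is essentially the only property of $\pi$ that I will use.

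Writing the structure through the maps $F,G,\Delta$ of Proposition \ref{proposition21}, one computes $a\circ a'=\phi(a)Fa'$ and $a\circ r=\phi(a)Gr$, so both products land in $\mathfrak{n}_1$; hence each hypothesis forces $\mathfrak{n}_1\ne\{0\}$. First I would treat the case $\s\circ\s\ne\{0\}$, i.e. $\phi(a)Fa'\ne0$ for some $a,a'$. Setting $\tilde F=\pi\circ F:\s\to\mathfrak{n}_1$, the identity $\phi(a)\pi=\phi(a)$ turns condition (1) of Proposition \ref{proposition21} into $\phi_1(a)\tilde Fa'=\phi_1(a')\tilde Fa$, while $\tilde F\ne0$ because $\phi_1(a)\tilde Fa'=\phi(a)Fa'\ne0$. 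Since $\mathfrak{n}_1^\s=\{0\}$, the requirement $\tilde F(\s)\not\subset\mathfrak{n}_1^\s$ of case (1) of Remark \ref{rema22} holds automatically, so $(a+r)\,\tilde\circ\,(a'+r')=\phi_1(a)\tilde Fa'$ is a nontrivial $A_{BD}$-structure on $\tilde\g$.

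For the remaining case I would assume $\s\circ\s=\{0\}$ and $\s\circ\mathfrak{n}_1\ne\{0\}$, and use $\tilde G=\pi\circ G|_{\mathfrak{n}_1}:\mathfrak{n}_1\to\mathfrak{n}_1$ together with the product $(a+r)\,\tilde\circ\,(a'+r')=\phi_1(a)\tilde Gr'+\phi_1(a')\tilde Gr$. By case (2) of Remark \ref{rema22} (with Corollary \ref{corol21}), this is a nontrivial $A_{BD}$-structure provided $\tilde G\ne0$ and $\phi_1(a)\tilde G\phi_1(a')(r)=\phi_1(a')\phi_1(a)\tilde G(r)$ for all $r\in\mathfrak{n}_1$; nontriviality follows just as above from $\phi_1(a)\tilde Gr=\phi(a)Gr\ne0$. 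Here is where the two hypotheses interact, and it is the step I expect to be the crux. Condition (4) of Proposition \ref{proposition21} reads $\phi(a)G\phi(a')(r)=\phi(a')\phi(a)G(r)+[\phi(a)Fa',r]$, and the obstruction to descending it to the (now abelian) radical of $\tilde\g$ is precisely the cross term $[\phi(a)Fa',r]$, which need not vanish in $\g$. The assumption $\s\circ\s=\{0\}$ says exactly that $\phi(a)Fa'=0$ for all $a,a'$, killing this term; applying $\phi(a)\pi=\phi(a)$ on both sides of the resulting identity then yields the required commutation relation for $\tilde G$, completing the argument.
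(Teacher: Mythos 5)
Your proposal is correct and follows essentially the same route as the paper's own proof: the same decomposition $R(\g)=\mathfrak{n}_1\oplus\mathfrak{n}_2$, the same projected maps $\pi_1 F$ and $\pi_1 G|_{\mathfrak{n}_1}$, the same case split on $\s\circ\s$, and the same invocation of cases (1) and (2) of Remark \ref{rema22}. Your explicit verification that the cross term $[\phi(a)Fa',r]$ in condition (4) of Proposition \ref{proposition21} is killed by $\s\circ\s=\{0\}$, via the identity $\phi(a)\pi=\phi(a)$, is precisely the step the paper leaves implicit, so this is a faithful (and slightly more detailed) rendering of the same argument.
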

\begin{proof}
It is quite clear that $\tilde{\mathfrak{g}}=\mathfrak{s}\ltimes_{\phi_1} \mathfrak{n}_1$ is perfect because $\mathfrak{n}_1=[\mathfrak{s}, R(\mathfrak{g})]=[\mathfrak{s},\mathfrak{n}_1].$ Let us denote $\pi_1: R(\g)=\mathfrak{n}_1\oplus \mathfrak{n}_2\to \mathfrak{n}_1$  the natural projection onto $\mathfrak{n}_1$. 

Take $F$, $G$ and $\Delta$ as in Proposition \ref{proposition21}, so that  
$$
	(a + r) \circ (a' + r') = \phi(a)F(a') + \phi(a')G(r) + \phi(a)G(r') + \Delta(r)(r')$$ for all $a, a' \in \mathfrak{s}$, $r, r' \in R(\mathfrak{g})$.
	Notice that $\s\circ\s\subset [\s,R(\g)]=\mathfrak{n}_1$ and $\s\circ\mathfrak{n}_1\subset  [\s,R(\g)]=\mathfrak{n}_1$.  
If $\s\circ\s\ne\{0\}$, then $F(\s)\not\subset \mathfrak{n}_2$ and we can construct a nontrivial $A_{BD}$-structure in $\tilde\g$ as in the first case of Remark \ref{rema22} for $F_1=\pi_1F$. Otherwise, when $\s\circ\s=\{0\}$, we can consider $G_1=\pi_1G_{|\mathfrak{n}_1}$ and define an $A_{BD}$-structure in $\tilde\g$ by
$(a+r)\bullet(a'+r')=\phi_1(a')G_1(r)+\phi_1(a)G_1(r'), $ for any $a,a'\in \mathfrak{s}$, $r,r'\in \mathfrak{n}_1$, as we have done in the second example of Remark \ref{rema22}. It is obviously nontrivial because $\mathfrak{s}\circ \mathfrak{n}_1 \neq \lbrace 0 \rbrace$ and
$a\bullet r_1=\phi_1(a)G_1r_1=\phi(a)\pi_1Gr_1=a\circ r_1,$
for all $a\in\s$, $r_1\in \mathfrak{n}_1$. This completes the proof.
\end{proof}
\begin{lemma}\label{lemma23}
	Suppose that  $\mathfrak{g} = \mathfrak{s} \ltimes_{\phi} R(\mathfrak{g})$ is a perfect Lie algebra with non-abelian radical that admits a non-trivial $A_{BD}$-structure  $\circ$.  
	Let $R(\mathfrak{g})= \mathfrak{n}_1 \oplus \mathfrak{n}_2$ be the decomposition into $s$-submodules with $\mathfrak{n}_1=[\mathfrak{s}, R(\mathfrak{g})]$ and  $\mathfrak{n}_2 = R( \g)^\s$ and put $\phi_1 = \phi_{|_{\mathfrak{n}_1}}$.  If $\mathfrak{s} \circ \mathfrak{n}_1 = \{0\},$  then $\g\circ \mathfrak{n}_1 = \{0\}$.  
\end{lemma}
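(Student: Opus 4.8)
The plan is to read off what the hypothesis and the conclusion say in terms of the three maps $F,G,\Delta$ attached to $\circ$ by Proposition \ref{proposition21}, and then to reduce everything to showing that $\Delta$ vanishes on $\mathfrak n_1$.

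First I would record what $\s\circ\mathfrak n_1=\{0\}$ means. For $a\in\s$ and $n\in\mathfrak n_1$ the formula of Proposition \ref{proposition21} gives $a\circ n=\phi(a)G(n)$, so the hypothesis is equivalent to $\phi(a)G(n)=0$ for all $a,n$, that is, $G(\mathfrak n_1)\subseteq R(\g)^{\s}=\mathfrak n_2$. On the other hand, since $\circ$ is commutative and $\s\circ\mathfrak n_1=\{0\}$ is already granted, proving $\g\circ\mathfrak n_1=\{0\}$ amounts to proving $r\circ n=\Delta(r)(n)=0$ for all $r\in R(\g)$, $n\in\mathfrak n_1$; by the symmetry $\Delta(r)(n)=\Delta(n)(r)$ this is in turn equivalent to $\Delta(n)=0$ for every $n\in\mathfrak n_1$. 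So the whole statement reduces to the single assertion that each derivation $\Delta(n)$, $n\in\mathfrak n_1$, is zero.

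The key structural step is to feed $r=n\in\mathfrak n_1$ into identity (5) of Proposition \ref{proposition21}. The correction term $[\phi(a')G(n),r']$ then vanishes by the previous paragraph, leaving $\Delta(n)\phi(a')=\phi(a')\Delta(n)$ on $R(\g)$. Thus $\Delta(n)$ is an $\s$-equivariant derivation of $R(\g)$; equivalently $L_n$ is a derivation of $\g$ that kills $\s$ and commutes with $\ad\s$. In particular $\Delta(n)$ respects the isotypic splitting $R(\g)=\mathfrak n_1\oplus\mathfrak n_2$, since $\Delta(n)(\phi(a)r)=\phi(a)\Delta(n)(r)\in\mathfrak n_1$ forces $\Delta(n)(\mathfrak n_1)\subseteq\mathfrak n_1$, while $\phi(a)\Delta(n)(m)=\Delta(n)\phi(a)(m)=0$ for $m\in\mathfrak n_2$ forces $\Delta(n)(\mathfrak n_2)\subseteq R(\g)^{\s}=\mathfrak n_2$. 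Combined with the symmetry of $\Delta$ this already yields $\mathfrak n_1\circ\mathfrak n_1\subseteq\mathfrak n_1$ and $\mathfrak n_1\circ\mathfrak n_2=\mathfrak n_2\circ\mathfrak n_1\subseteq\mathfrak n_2$. Writing a generic element of $\mathfrak n_1$ as $[a,r]$ and using that $L_n$ is a derivation vanishing on $\s$ also gives the handy relation $n\circ[a,r]=\phi(a)(n\circ r)$.

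The hard part will be upgrading ``$\Delta(n)$ is an $\s$-equivariant derivation'' to ``$\Delta(n)=0$'', since equivariance, the derivation rule and the symmetry do not by themselves force an equivariant derivation of a solvable algebra to vanish; one must exploit the global structure. My plan here is to run an induction along the derived series $R(\g)=R^{(0)}\supseteq R^{(1)}\supseteq\cdots\supseteq R^{(m)}=\{0\}$, each $R^{(j)}$ being a characteristic ideal, hence both $\s$-invariant and $\Delta(n)$-invariant, and splitting compatibly as $(R^{(j)}\cap\mathfrak n_1)\oplus(R^{(j)}\cap\mathfrak n_2)$. The engine for climbing the filtration would be perfectness of $\g$, which forces $R(\g)=\mathfrak n_1+[R(\g),R(\g)]$ and hence $\mathfrak n_2=\pi_2\big([R(\g),R(\g)]\big)$, so that the values of $\Delta(n)$ on each graded piece are controlled by its values on $\mathfrak n_1$ and on deeper commutators; the symmetry $\Delta(n)(n')=\Delta(n')(n)$ together with $n\in\mathfrak n_1\subseteq\mathrm{nilrad}(\g)$ is what I would use to propagate vanishing from one layer to the next. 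I expect this inductive vanishing, rather than the bookkeeping of the first two paragraphs, to be the real obstacle, and it is precisely where the non-abelian radical and the perfectness hypothesis must genuinely enter.
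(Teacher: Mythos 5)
Your first two paragraphs are fine as bookkeeping: the hypothesis is indeed $\phi(a)G(n)=0$ for $a\in\s$, $n\in\mathfrak{n}_1$, the conclusion is indeed equivalent to $\Delta(n)=0$ for all $n\in\mathfrak{n}_1$, and feeding $n\in\mathfrak{n}_1$ into identity (5) of Proposition \ref{proposition21} does give $[\Delta(n),\phi(a)]=0$. But the proof stops exactly where it would have to start. Your last paragraph is a plan, not an argument: no mechanism is ever exhibited for ``propagating vanishing from one layer to the next'' along the derived series, and the ingredients you list (symmetry of $\Delta$, $\s$-equivariance, $\mathfrak{n}_1\subseteq\mathrm{nilrad}(\g)$, $R(\g)=\mathfrak{n}_1+[R(\g),R(\g)]$) do not combine into one. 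You say yourself that this is ``the real obstacle''; as written, the proposal is therefore incomplete, and this is a genuine gap rather than a routine omission.

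There is also a structural reason why your plan cannot be completed with these tools: you are chasing the wrong conclusion. The ``$\mathfrak{n}_1$'' in the statement's conclusion is a misprint for ``$\mathfrak{n}_2$''. The paper's own proof establishes precisely $\g\circ\mathfrak{n}_2=\{0\}$, and its application in Proposition \ref{propreduc} confirms this reading: after the lemma is invoked there, the surviving product is $(a+r_1+r_2)\circ(a'+r_1'+r_2')=\Delta(r_1)(r_1')$, i.e.\ $\mathfrak{n}_1\circ\mathfrak{n}_1$ is still allowed to be nonzero --- flatly inconsistent with the conclusion you are trying to prove. Killing $\Delta$ on $\mathfrak{n}_1$ is not done at this stage of the paper at all; it is the content of Section 5 (reduction to abelian radical and then genuine $\mathfrak{sl}(2)$ representation theory in Proposition \ref{sl2}), and an $\s$-equivariant, symmetric $\Delta$ commuting with $\phi(\s)$ is exactly the configuration that survives all elementary bookkeeping there. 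By contrast, the corrected statement has a short proof built on a tool you never invoke: since $\g$ is perfect and $R(\g)$ is non-abelian, $\mathfrak{n}_1$ generates $R(\g)$ as a Lie algebra; the map $\phi(a)G$ is a \emph{derivation} of $R(\g)$ (Proposition \ref{proposition21}(2)) vanishing on $\mathfrak{n}_1$ by hypothesis, hence $\phi(a)G=0$ on all of $R(\g)$, so $\s\circ R(\g)=\{0\}$; then identity (5) gives $[\Delta(r),\phi(a)]=0$ for \emph{all} $r\in R(\g)$, whence $\Delta(r)(\mathfrak{n}_2)\subseteq\mathfrak{n}_2$; by symmetry $\Delta(r_2)(r)=\Delta(r)(r_2)\in\mathfrak{n}_2$, so $\Delta(r_2)\phi(a)(r)=\phi(a)\Delta(r_2)(r)=0$, i.e.\ $\Delta(r_2)$ annihilates $\mathfrak{n}_1=\phi(\s)R(\g)$; finally $\Delta(r_2)$, a derivation vanishing on the generating set $\mathfrak{n}_1$, is zero. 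Note how the derivation property plus generation is used twice; your proposal uses it nowhere, which is the concrete reason it cannot reach even the corrected conclusion, let alone the misprinted one.
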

\begin{proof} Put that the product on  $\g=\s\ltimes_\phi R(\g)$ is given as in Proposition \ref{proposition21} by
$$(a + r) \circ (a' + r') = \phi(a)F(a') + \phi(a')G(r) + \phi(a)G(r') + \Delta(r)(r'),\quad a,a'\in\s,\ r,r'\in R(\g).$$
Since $\g$ is perfect, one must have $\mathfrak{n}_2\subset[\mathfrak{n}_1 ,\mathfrak{n}_1 ]$ and we can, thus, find a set of generators of $R(\g)$ (as Lie algebra)  $\{t_\alpha\}_{\alpha\le \ell}\subset\mathfrak{n}_1 $. But then, since $\phi(a)G(t_\alpha)=0$ and $\phi(a)G$ is a derivation of $R(\g)$ for all $a\in\s$, we also get $\phi(a)G(r_2)=0$ for all $r_2\in\mathfrak{n}_2$. Hence, $\s\circ \mathfrak{n}_2=\{0\}$.

As a consequence, we get $r_2\circ (s+r)=\Delta(r_2)(r)$ for all $s\in\s$, $r\in R(\g)$, $r_2\in\mathfrak{n}_2$. Notice that the equality $\phi(a)G=0$, for all $a\in\s$, and the condition (5) of Proposition \ref{proposition21} give $\phi(a)\Delta(r)(r_2)=\Delta(r)\phi(s)r_2=0,$
which shows that $\Delta(r)(r_2)\in\mathfrak{n}_2$ for all $r\in R(\g)$, $r_2\in\mathfrak{n}_2$. But then
$$0=\phi(a)\Delta({r_2})(r)=\Delta({r_2})\phi(s)(r),$$
for all $r\in R(\g)$, which implies $\Delta({r_2})(r_1)=0$ for all $r_1\in\mathfrak{n}_1=\phi(\s)(\mathfrak{n}_1)$. But, again, since $\Delta({r_2})$ is a derivation and vanishes on the set of generators $\{t_\alpha\}_{\alpha\le \ell}\subset\mathfrak{n}_1 $, one has that $\Delta({r_2})=0$.
\end{proof}
\begin{proposition}\label{propreduc}
	If there exists a perfect Lie algebra admitting a non-trivial $A_{BD}$-structure then there exists a perfect Lie algebra with abelian radical also admitting a non-trivial $A_{BD}$-structure.
\end{proposition}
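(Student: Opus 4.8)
The plan is to turn a perfect Lie algebra $\g$ carrying a nontrivial $A_{BD}$-structure $\circ$ into a perfect Lie algebra with abelian radical still carrying a nontrivial one, by a case analysis governed by Propositions \ref{proposition22} and \ref{proposition23}. By Lemma \ref{complexif} I may assume $\K$ is algebraically closed, so $\s$ is semisimple and every finite-dimensional $\s$-module is completely reducible; if $R(\g)$ is already abelian there is nothing to prove, so I assume it is not. Writing $\g=\s\ltimes_\phi R(\g)$, I decompose the radical into its isotypic pieces $R(\g)=\mathfrak{n}_1\oplus\mathfrak{n}_2$ with $\mathfrak{n}_1=[\s,R(\g)]$ and $\mathfrak{n}_2=R(\g)^\s$. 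Since over an algebraically closed field every one-dimensional $\s$-module is trivial, the hypothesis ``no one-dimensional $\s$-submodule'' of Proposition \ref{proposition22} is exactly $\mathfrak{n}_2=\{0\}$. I record the product $\circ$ through the three maps $F,G,\Delta$ furnished by Proposition \ref{proposition21}.

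First I would dispatch the two cases already covered. If $\mathfrak{n}_2=\{0\}$, Proposition \ref{proposition22} yields the perfect abelian-radical algebra $\s\ltimes_\phi\mathfrak{a}$ directly. If $\mathfrak{n}_2\ne\{0\}$ but $\s\circ\s\ne\{0\}$ or $\s\circ\mathfrak{n}_1\ne\{0\}$, Proposition \ref{proposition23} provides the perfect abelian-radical algebra $\s\ltimes_{\phi_1}\mathfrak{n}_1$ with a nontrivial structure. Hence the entire difficulty is confined to the single remaining case $\mathfrak{n}_2\ne\{0\}$ together with $\s\circ\s=\{0\}$ and $\s\circ\mathfrak{n}_1=\{0\}$.

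In that case I would invoke Lemma \ref{lemma23}: from $\s\circ\mathfrak{n}_1=\{0\}$ one obtains $\phi(a)G=0$ for every $a\in\s$ (so $\s\circ R(\g)=\{0\}$) and $\Delta(r_2)=0$ for every $r_2\in\mathfrak{n}_2$ (so $\g\circ\mathfrak{n}_2=\{0\}$). Combined with $\s\circ\s=\{0\}$ this gives $\s\circ\g=\{0\}$ and $\g\circ\mathfrak{n}_2=\{0\}$, so that $\circ$ survives only on $\mathfrak{n}_1\times\mathfrak{n}_1$, where $n\circ n'=\Delta(n)(n')$ and is nonzero. The reduced form of identity (5) of Proposition \ref{proposition21}, which here reads $\Delta(n)\phi(a)=\phi(a)\Delta(n)$, makes each $\Delta(n)$ an $\s$-module endomorphism of $R(\g)$; as $\mathfrak{n}_1$ and $\mathfrak{n}_2$ share no common irreducible constituent, $\Delta(n)$ preserves $\mathfrak{n}_1$. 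I would then form $\tilde{\g}=\s\ltimes_{\phi_1}\mathfrak{a}_1$, where $\mathfrak{a}_1$ equals $\mathfrak{n}_1$ as an $\s$-module but carries the abelian bracket. It is perfect because $[\s,\mathfrak{n}_1]=\mathfrak{n}_1$, and $\tilde{\Delta}(n):=\Delta(n)|_{\mathfrak{n}_1}$, which is still nonzero, satisfies the symmetry and $\s$-equivariance required by Remark \ref{rema22}(3), hence defines a nontrivial $A_{BD}$-structure on $\tilde{\g}$.

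The main obstacle is exactly this last case: when $\mathfrak{n}_2\ne\{0\}$ one cannot simply abelianize the whole radical, for then the invariant part $\mathfrak{n}_2$ becomes central and $\tilde{\g}$ fails to be perfect; the reduction must be carried out onto $\mathfrak{n}_1$ alone. The two points that make this work — that Lemma \ref{lemma23} forces $\circ$ to live on $\mathfrak{n}_1\times\mathfrak{n}_1$, and that $\Delta$ maps $\mathfrak{n}_1$ into itself so that the restricted, still nonzero, map obeys the hypotheses of Remark \ref{rema22}(3) — both rest on the $\s$-equivariance encoded in identity (5).
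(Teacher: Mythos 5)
Your proof is correct and follows essentially the same route as the paper: the same decomposition $R(\g)=\mathfrak{n}_1\oplus\mathfrak{n}_2$ and case split via Propositions \ref{proposition22} and \ref{proposition23}, then Lemma \ref{lemma23} to reduce $\circ$ to $(a+r_1+r_2)\circ(a'+r_1'+r_2')=\Delta(r_1)(r_1')$, and finally Remark \ref{rema22}(3) applied to $\Delta$ on the abelianized $\mathfrak{n}_1$. The only (harmless) difference is at the last step: where the paper works with $\pi_1\Delta$ and rules out $\pi_1\Delta|_{\mathfrak{n}_1\times\mathfrak{n}_1}=0$ by a contradiction argument based on identity (5) of Proposition \ref{proposition21}, you use that same identity (with $\phi(a)G=0$) to observe that each $\Delta(n)$ is an $\s$-module endomorphism of $R(\g)$ and hence already maps $\mathfrak{n}_1$ into $\mathfrak{n}_1$, which makes the projection unnecessary and the nonvanishing of the restriction immediate.
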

\begin{proof}
	Let $\mathfrak{g}=\mathfrak{s}\oplus R(\mathfrak{g})$ be a perfect Lie algebra with a non-trivial $A_{BD}$-structure  $\circ$. We have the decomposition of the $\mathfrak{s}$-module $R(\g)$ into the sum of two submodules given by
 $R(\mathfrak{g})=\mathfrak{n}_1\oplus \mathfrak{n}_2,$ where  $\mathfrak{n}_1=[\mathfrak{s}, R(\mathfrak{g})]$ and $\mathfrak{n}_2=R(\mathfrak{g})^\mathfrak{s}.$ The case $\mathfrak{n}_2=\lbrace0 \rbrace$ was considered  in Proposition \ref{proposition22}, so  we can suppose that $\mathfrak{n}_2\neq \lbrace 0\rbrace$.

According to Proposition \ref{proposition23}, if either $\s\circ\s\ne\{0\}$ or $\s\circ \mathfrak{n}_1\ne\{0\}$, then we can construct an $A_{BD}$-structure on the perfect Lie algebra $\tilde\g=\s\ltimes_{\phi_1} R(\tilde\g)$ where $R(\tilde\g)$ is the vector space $\mathfrak{n}_1$ with zero bracket and $\phi_1$ is the representation on $\mathfrak{n}_1$ defined by $\phi$. Therefore, we can suppose that $\s\circ(\s\oplus\mathfrak{n}_1)=\{0\}$ so that, using Lemma \ref{lemma23}, we get that our product $\circ$ is simply given by
$$(a+r_1+r_2)\circ(a'+r_1'+r_2')=\Delta({r_1})(r_1'), \quad  a,a'\in \mathfrak{s}, r_1,r_1'\in \mathfrak{n}_1,  r_2,r_2'\in \mathfrak{n}_2.$$
Let $\pi_1:R(\g)=\mathfrak{n}_1\oplus \mathfrak{n}_2\to \mathfrak{n}_1$ denote the natural projection and define on the perfect Lie algebra with abelian radical $\tilde\g$ given above the product
$$(a+r_1)\bullet(a'+r_1')=\pi_1\Delta({r_1})(r_1'), \quad  a,a'\in \mathfrak{s}, r_1,r_1'\in \mathfrak{n}_1.$$
It is straightforward to prove that the restriction $\Delta_1$ of $\pi_1\Delta$ to $\mathfrak{n}_1\times \mathfrak{n}_1$ verifies the conditions $\Delta_1({r_1})(r_1')=\Delta_1({r_1'})(r_1)$ and $\Delta_1(r_1)\phi(a)(r_1')=\phi(a)\Delta_1 (r_1)(r'_1),$ for all $a\in\s$, $r_1,r_1'\in \mathfrak{n}_1$, because they hold for $\Delta$. So, according to the third case of Remark \ref{rema22}, it suffices to show that $\Delta_1\ne 0$. But $\Delta_1= 0$ would imply $\Delta(r_1)(r_1')\in \mathfrak{n}_2$ for all $r_1,r_1'\in \mathfrak{n}_1$ and the condition (5) in Proposition \ref{proposition21} would give
$$0=\phi(a)\Delta(r_1)(r_1')=\Delta(r_1)\phi(a)(r_1'),$$
which, since $\mathfrak{n}_1=\phi(\s)\mathfrak{n}_1$, would imply $\Delta=0$, a contradiction since $\circ$ is nontrivial.
\end{proof}

\section{The case  with abelian radical and the main result}\label{abrad}

It was proved in \cite[Th. 4.1]{BO} that if the radical of $\g$ is abelian and a nonzero simple ${\mathfrak s}$-module then every $A_{BD}$-structure on $\g$ is defined by $(s+r)\circ (s'+r')=[\beta(s),s']$, for all  $a,a'\in \mathfrak{s}$, $r,r'\in R(\mathfrak{g})$, where $\beta:{\mathfrak s}\to \g$ is a map such that $[\beta(s),s']=[\beta(s'),s]$. Using also \cite[Prop. 4.3]{BO} one easily shows that $\beta(\mathfrak{s})\subset R(\g)$ so that we see that, in that case, the only possible $A_{BD}$-structures are those given as in the first case of Remark \ref{rema22}. Explicitly, we can reformulate the mentioned theorem in \cite{BO} as follows:

\begin{proposition}
	\label{SaidHassan}
	Let $\mathfrak{g} =  \mathfrak{s} \ltimes_\phi R(\mathfrak{g})$ be a  Lie algebra such that $R(\mathfrak{g}) \neq \{0\}$ is an abelian Lie subalgebra and $R(\mathfrak{g})$ is a simple $\mathfrak{s}$-module. Every $A_{BD}$-structure on $\g$ is defined by $(a+r)\circ (a'+r')=\phi(a)F(a')$, for all $a,a'\in \mathfrak{s}$, $r,r'\in  R(\mathfrak{g})$, where $F: \mathfrak{s} \to  R(\mathfrak{g})$ is a linear map such that $\phi(a)F(a')=\phi(a')F(a)$, for all $a,a'\in \mathfrak{s}$.
	\end{proposition}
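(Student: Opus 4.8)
The plan is to read off the shape of the structure from Corollary \ref{corol21}: since $R(\mathfrak g)$ is abelian, every $A_{BD}$-structure on $\mathfrak g=\mathfrak s\ltimes_\phi R(\mathfrak g)$ has the form
$$(a+r)\circ(a'+r')=\phi(a)Fa'+\phi(a)Gr'+\phi(a')Gr+\Delta(r)(r'),$$
with $F,G,\Delta$ obeying conditions (1)--(4) of that corollary. Condition (1) is precisely the symmetry $\phi(a)Fa'=\phi(a')Fa$ required in the statement, so the entire proposition reduces to proving that the $G$-term and the $\Delta$-term contribute nothing, i.e. $\phi(a)Gr=0$ and $\Delta(r)=0$ for all $a\in\mathfrak s$, $r\in R(\mathfrak g)$. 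Throughout I would use that $\K$ is algebraically closed (the standing assumption made after Remark \ref{rema1}), so that Schur's lemma applies to the simple module $R(\mathfrak g)$, and that $R(\mathfrak g)$ is a \emph{nontrivial} simple module; the latter gives $R(\mathfrak g)^{\mathfrak s}=\{0\}$ and $\dim_\K R(\mathfrak g)\ge 2$. (The one-dimensional trivial-module case is degenerate and does not occur in the perfect setting where the proposition is applied.)

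First I would dispose of $\Delta$. Condition (4) reads $\Delta(r)\phi(a)=\phi(a)\Delta(r)$, so each $\Delta(r)$ is an $\mathfrak s$-module endomorphism of the simple module $R(\mathfrak g)$; by Schur's lemma $\Delta(r)=\lambda(r)\,\mathrm{Id}$ for a scalar $\lambda(r)$, and linearity of $\Delta$ forces $\lambda\colon R(\mathfrak g)\to\K$ to be linear. Substituting this into the symmetry condition (2), $\Delta(r)(r')=\Delta(r')(r)$, gives $\lambda(r)r'=\lambda(r')r$ for all $r,r'$; choosing $r,r'$ linearly independent (possible as $\dim R(\mathfrak g)\ge 2$) forces $\lambda\equiv 0$, hence $\Delta=0$.

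The $G$-term is the delicate step and I expect it to be the main obstacle. Rewriting condition (3) as $\phi(a)G\phi(a')=\phi(a')\bigl(\phi(a)G\bigr)$ shows that $\phi(a)G$ commutes with every $\phi(a')$, so by Schur again $\phi(a)G=\mu(a)\,\mathrm{Id}$ for a linear functional $\mu$ on $\mathfrak s$. Replacing $a$ by $[a,a']$ and expanding $\phi([a,a'])=\phi(a)\phi(a')-\phi(a')\phi(a)$ yields the operator identity $\mu(a')\phi(a)-\mu(a)\phi(a')=\mu([a,a'])\,\mathrm{Id}$. Now I would take traces: since $\mathfrak s$ is semisimple one has $\mathfrak s=[\mathfrak s,\mathfrak s]$, whence every $\phi(a)$ is a sum of commutators and $\mathrm{tr}\,\phi(a)=0$; thus the left-hand side is traceless while the right-hand side has trace $\mu([a,a'])\dim R(\mathfrak g)$, forcing $\mu$ to vanish on $[\mathfrak s,\mathfrak s]=\mathfrak s$. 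Hence $\mu\equiv 0$, i.e. $\phi(a)G=0$ for all $a$, so the two $G$-terms $\phi(a)Gr'+\phi(a')Gr$ vanish identically (equivalently $\mathrm{Im}\,G\subseteq R(\mathfrak g)^{\mathfrak s}=\{0\}$, so $G=0$).

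Combining the two reductions, the product collapses to $(a+r)\circ(a'+r')=\phi(a)Fa'$, with $F$ satisfying $\phi(a)Fa'=\phi(a')Fa$ by condition (1), which is exactly the asserted form. The crux of the argument is the treatment of $G$: Schur's lemma pins $\phi(a)G$ down to a scalar, and it is the interplay between the representation identity and the vanishing of the traces of $\phi$ — a consequence of the perfectness of the semisimple $\mathfrak s$ — that kills that scalar. This trace computation is both the least routine step and the place where semisimplicity of $\mathfrak s$ and algebraic closure of $\K$ are genuinely used.
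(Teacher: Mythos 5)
Your proof is correct, and it takes a genuinely different route from the paper's. The paper does not prove Proposition \ref{SaidHassan} internally at all: it is presented as a reformulation of Theorem 4.1 of \cite{BO}, combined with Proposition 4.3 of \cite{BO} to place the image of the map $\beta$ appearing there inside $R(\g)$. You instead derive the statement from the paper's own Corollary \ref{corol21}: Schur's lemma (legitimate under the standing assumption, made before Proposition \ref{proposition21}, that $\K$ is algebraically closed, and since all spaces are finite-dimensional) turns condition (4) into $\Delta(r)=\lambda(r)\,\mathrm{Id}$, and the symmetry condition (2) then forces $\lambda\equiv 0$ once $\dim R(\g)\ge 2$; condition (3) turns $\phi(a)G$ into $\mu(a)\,\mathrm{Id}$, and your trace computation on the resulting identity $\mu(a')\phi(a)-\mu(a)\phi(a')=\mu([a,a'])\,\mathrm{Id}$, using $\s=[\s,\s]$ and characteristic zero, forces $\mu\equiv 0$, hence $\phi(a)G=0$ for all $a\in\s$. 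Both steps check out, and what remains is exactly the $F$-term with the symmetry of condition (1). What your route buys is a short self-contained proof inside the paper, one that moreover works uniformly for any semisimple $\s$ (no reduction to $\mathfrak{sl}(2)$-computations) and makes explicit where each hypothesis is used; what the paper's route buys is brevity and deference to the already-published source.

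One point deserves emphasis: your proof requires $R(\g)$ to be a \emph{nontrivial} simple module (to get $\dim R(\g)\ge 2$, used against $\Delta$), and you flag this restriction explicitly. This is not a defect of your argument, because the proposition as literally stated is false for the trivial one-dimensional module. Indeed, on $\g=\s\oplus\K v$ with $v$ central, the commutative product $(a+\alpha v)\circ(a'+\alpha' v)=\alpha\alpha' v$ is a nonzero $A_{BD}$-structure: both sides of the identity $x\circ[y,z]=[x\circ y,z]+[y,x\circ z]$ vanish, since $[\g,\g]=\s$ has no $v$-component and $x\circ y\in\K v$ is central. Yet every product of the form $(a+r)\circ(a'+r')=\phi(a)F(a')$ is identically zero in this case, since $\phi$ acts trivially on the radical. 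So nontriviality of the module (equivalently, perfectness of $\g$) must be read into the hypotheses; this holds in all of the paper's applications (Corollary \ref{msimple} and Proposition \ref{sl2}), and your proof covers precisely those cases.
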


Let us prove that all $A_{BD}$-structures are trivial in the particular case $\mathfrak{s} =\mathfrak{sl}_2(\mathbb{C})$ and $R(\mathfrak{g})=V(m)$ for a certain $m\in {\mathbb N}\backslash \{0\}$, the $(m+1)$-dimensional irreducible module endowed with a vanishing bracket. Such result was already seen in  \cite[Corollary 4.2]{BO} for an even $m$. 
 
 \begin{corollary}\label{msimple} For $m\in {\mathbb N}\backslash\{0\}$, let $\phi$ denote the simple representation of $\mathfrak{sl}(2)$ on the $(m+1)$-dimensional space $V(m)$ and  consider that  $\g=\mathfrak{sl}(2)\ltimes_\phi V(m)$, where $R(\mathfrak{g})= V(m)$ is abelian. Any $A_{BD}$-structure on $\g$ is trivial.
 \end{corollary}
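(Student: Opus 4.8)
The plan is to use Proposition \ref{SaidHassan} to turn the statement into a concrete linear-algebra problem on the irreducible module, and then to solve that problem in a weight basis.

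First I would invoke Proposition \ref{SaidHassan}: every $A_{BD}$-structure on $\g=\mathfrak{sl}(2)\ltimes_\phi V(m)$ has the form $(a+r)\circ(a'+r')=\phi(a)F(a')$ for a linear map $F:\mathfrak{sl}(2)\to V(m)$ satisfying the symmetry condition $\phi(a)F(a')=\phi(a')F(a)$ for all $a,a'$. Since $m\ge 1$, the module $V(m)$ is nontrivial and irreducible, so it has no nonzero invariants; consequently $\circ$ is trivial if and only if $\phi(a)F(a')=0$ for all $a,a'$, which is in turn equivalent to $F(a')\in V(m)^{\mathfrak{sl}(2)}=\{0\}$ for all $a'$, i.e. to $F=0$. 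Hence the entire statement reduces to showing that the symmetry condition forces $F=0$.

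Next I would fix an $\mathfrak{sl}(2)$-triple $\{e,f,h\}$ and the standard weight basis $v_0,\dots,v_m$ of $V(m)$, for which $\phi(h)v_k=(m-2k)v_k$, $\phi(f)v_k=v_{k+1}$ (with $v_{m+1}=0$) and $\phi(e)v_k=k(m-k+1)v_{k-1}$ (with $v_{-1}=0$). Writing the coordinates of $F(e)$, $F(f)$ and $F(h)$ in this basis and using bilinearity, the symmetry condition is equivalent to the three vector identities obtained from the pairs $(e,f)$, $(e,h)$ and $(f,h)$. Comparing the coefficient of each $v_j$ turns these identities into linear recurrences relating the three families of coordinates, together with boundary relations arising at the extreme indices $j=0$ and $j=m$ (where one side of an identity receives no contribution); these boundary relations already force the extremal coordinates of $F(e)$ and $F(f)$ to vanish, using only $m\neq 0$.

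The heart of the argument is then an elimination: expressing the coordinates of $F(e)$ and of $F(h)$ in terms of those of $F(f)$ by means of two of the recurrences, and substituting into the third, one is left with scalar equations of the shape $\beta_{j}\,D_j=0$, where $\beta_j$ is a coordinate of $F(f)$ and $D_j$ is an explicit quadratic expression in $j$ and $m$. The single computation worth carrying out in full is the verification that, after cancellation, $D_j$ collapses to the constant $m(m+2)$, which is nonzero precisely because $m\ge 1$. Combined with the boundary relations, this forces every coordinate of $F(f)$ to vanish; feeding this back through the recurrences then cascades to $F(e)=0$ and $F(h)=0$, so $F=0$ and $\circ$ is trivial.

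I expect the main obstacle to be exactly this cancellation together with the careful bookkeeping of the boundary terms: it is the collapse $D_j=m(m+2)$ and the extremal relations that make the argument run uniformly in $m$, and in particular dispose of the odd values of $m$ left aside by the even case of \cite[Corollary 4.2]{BO}. Conceptually, the computation is nothing but the injectivity of the $\mathfrak{sl}(2)$-equivariant antisymmetrization map $F\mapsto\big((a,a')\mapsto\phi(a)F(a')-\phi(a')F(a)\big)$ on $\mathrm{Hom}(\mathfrak{sl}(2),V(m))\cong V(2)\otimes V(m)$; by Schur's lemma injectivity amounts to nonvanishing on each multiplicity-one irreducible constituent, and the scalar $m(m+2)$, reminiscent of the Casimir eigenvalue on $V(m)$, is the concrete incarnation of this nonvanishing.
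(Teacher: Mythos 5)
Your proposal is correct and follows essentially the same route as the paper: invoke Proposition \ref{SaidHassan} to reduce everything to showing $F=0$, then write $F(e)$, $F(f)$, $F(h)$ in a weight basis of $V(m)$, turn the symmetry condition into recurrences with boundary relations, and observe that elimination collapses to the nonzero constant $m(m+2)$ (the paper's $m^{2}+2m$), forcing all coordinates to vanish. The only minor difference is that you eliminate toward the coordinates of $F(f)$ instead of $F(h)$, which happens to spare you the paper's separate parity analysis of the middle coordinate $i=m/2$ when $m$ is even.
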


\begin{proof}
Let us consider the basis $\{h,e,f\}$ of  $\mathfrak{sl}(2)$ satisfying 
$[h,e]= 2e$, $[h,f]= -2f$, $[e,f]= h.$ We will use the description of the irreducible representations of $\mathfrak{sl}(2)$ as given in \cite{Serre}, so that there exists a basis $\{v_i,\, 0 \leq i \leq m\}$ of $V(m)$ such that 
\begin{eqnarray*}
	& & \phi(h)v_i=(m-2i)v_i,\quad  i\in \{0,\dots,m\},\\
	& &\phi(e)v_i=(m-i+1)v_{i-1},\quad i\in \{1,\dots,m\},\\
	& &\phi(f)v_i=(i+1)v_{i+1},\quad i\in \{0,\dots,m-1\},\\
	& &\phi(e)v_0=0,\quad \phi(f)v_m=0.
\end{eqnarray*}

Let $F: \mathfrak{sl}(2) \to  V(m)$ be a linear map such that $\phi(a)F(a')=\phi(a')F(a)$, for all $a,a'\in \mathfrak{sl}(2)$ and put
$$F(e)=\sum_{i=0}^ma_iv_i,\quad F(f)=\sum_{i=0}^mb_iv_i, \quad 
	F(h)=\sum_{i=0}^mc_iv_i.$$
The conditions $\phi(e)F(f)=\phi(f)F(e)$, $\phi(e)F(h)=\phi(h)F(e)$ and $\phi(f)F(h)=\phi(h)F(f)$  respectively give
\begin{eqnarray*}
	& & b_1=a_{m-1}=0,  \quad ia_{i-1}=(m-i)b_{i+1} \quad 1\leq i\leq m-1,\\
	& &a_m=0,  \quad (m-i)c_{i+1}=(m-2i)a_i \quad 0\leq i\leq m-1,\\
	& &b_0=0, \quad ic_{i-1}=(m-2i)b_i \quad 1\leq i\leq m.
\end{eqnarray*}
and from those equations we deduce that $c_0=c_m=0$ and
$$\left((m-2i+2)(m-i)(i+1)-(m-2i-2)(m-i+1)i\right)c_i=0, \quad 1\leq i\leq m-1.$$
Since we have that $$(m-2i+2)(m-i)(i+1)-(m-2i-2)(m-i+1)i]=m^2+2m\ne 0,$$ one finally gets $c_i=0$ for all $i=0,\dots,m$.  Now, the identities 
$$0=(m-i)c_{i+1}=(m-2i)a_i \quad (0\leq i\leq m-1),\qquad 0=ic_{i-1}=(m-2i)b_i\quad  (1\leq i\leq m),$$
show that $a_i=b_i=0$ whenever $i\ne m/2$. If $m$ is even and $m\ge 4$, for $k=m/2$ we have $1\le k-1 < k+1\le m-1$ and, from $ia_{i-1}=(m-i)b_{i+1}$, whenever $1\leq i\leq m-1$, we then get
$$0=(k-1)a_{k-2}=(m-k+1)b_k,\quad (k+1)a_k=(m-k-1)b_{k+2}=0,$$
which imply $b_k=a_k=0$ because $(m-k+1)=k+1\ne 0$. If $m=2$ then $m/2=1$ and we already had
$b_1=0$, $a_1=a_{m-1}=0$. 
Thus, we finally get that $F$ is identically zero. According to Proposition \ref{SaidHassan} we then have that any $A_{BD}$-structure on $\g$ is trivial.
\end{proof}	

\medskip

We can now extend our result to perfect Lie algebras with arbitrary abelian radical and $\mathfrak{sl}(2)$ as Levi component. Explicitly:

\smallskip

\begin{proposition}\label{sl2}
 Let $\mathfrak{g}= \mathfrak{s}\ltimes_\phi R(\mathfrak{g})$ be a perfect Lie algebra with an abelian radical $R(\mathfrak{g})$ and $\mathfrak{s}=\mathfrak{sl}(2)$.  Then, any $A_{BD}$-structure on $\mathfrak{g}$ is trivial.
 \end{proposition}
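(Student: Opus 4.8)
The plan is to start from the description of $A_{BD}$-structures with abelian radical given in Corollary \ref{corol21}: any such structure is encoded by linear maps $F:\s\to R(\g)$, $G:R(\g)\to R(\g)$ and $\Delta:R(\g)\to\mathfrak{gl}(R(\g))$ satisfying conditions (1)--(4) there, with
$$(a+r)\circ(a'+r')=\phi(a)Fa'+\phi(a)Gr'+\phi(a')Gr+\Delta(r)(r').$$
Since $\g=\mathfrak{sl}(2)\ltimes_\phi R(\g)$ is perfect, one has $\phi(\s)R(\g)=R(\g)$, so the $\mathfrak{sl}(2)$-module $R(\g)$ has no trivial summand and $R(\g)^{\s}=\{0\}$. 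Consequently $\circ$ is trivial if and only if $F=0$, $G=0$ and $\Delta=0$, and I would establish these three vanishings in turn, fixing a decomposition $R(\g)=\bigoplus_\alpha V_\alpha$ into irreducible $\mathfrak{sl}(2)$-modules $V_\alpha\cong V(m_\alpha)$ with all $m_\alpha\ge 1$.

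For $F$, condition (1) reads $\phi(a)Fa'=\phi(a')Fa$, and projecting onto a summand $V_\alpha$ this is exactly the hypothesis of Corollary \ref{msimple} for the map $\pi_\alpha F:\mathfrak{sl}(2)\to V(m_\alpha)$. Hence each $\pi_\alpha F$ vanishes, so $F=0$. For $G$, I would first observe that condition (3), $\phi(a)G\phi(a')=\phi(a')\phi(a)G$, says precisely that $\phi(a)G$ commutes with every $\phi(a')$, i.e. $\phi(a)G\in\mathrm{End}_{\s}(R(\g))$ for all $a\in\s$. Writing $G$ blockwise as $G_{\beta\alpha}:V_\alpha\to V_\beta$, each block $\phi(a)|_{V_\beta}\,G_{\beta\alpha}$ must then be an intertwiner $V_\alpha\to V_\beta$. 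When $V_\alpha\not\cong V_\beta$, Schur's lemma forces it to be zero for every $a$, whence $\mathrm{Im}\,G_{\beta\alpha}\subseteq V_\beta^{\s}=\{0\}$; when $V_\alpha\cong V_\beta\cong V(m)$, Schur gives $\phi(a)|_{V_\beta}\,G_{\beta\alpha}=c_a\,\mathrm{Id}$, and since neither $\phi(e)$ nor $\phi(f)$ is surjective on $V(m)$ one gets $c_e=c_f=0$, so $\mathrm{Im}\,G_{\beta\alpha}\subseteq\ker\phi(e)\cap\ker\phi(f)=\{0\}$ because $m\ge1$. Thus $G=0$.

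For $\Delta$, condition (4) says each $\Delta(r)$ commutes with all $\phi(a)$, so $\Delta(r)\in\mathrm{End}_{\s}(R(\g))$ and in particular preserves the $\phi(h)$-weight spaces. The crux is to combine this with the symmetry condition (2): if $r,r'$ are weight vectors of weights $\lambda\ne\mu$, then $\Delta(r)(r')$ has weight $\mu$ while $\Delta(r')(r)=\Delta(r)(r')$ has weight $\lambda$, forcing $\Delta(r)(r')=0$. Hence, for a weight vector $r$ of weight $\lambda$, the intertwiner $\Delta(r)$ annihilates every weight space $R(\g)_\mu$ with $\mu\ne\lambda$. Writing $\Delta(r)$ on each isotypic component of type $V(m)$ as $\mathrm{Id}_{V(m)}\otimes B$, and using that every $V(m_\alpha)$ with $m_\alpha\ge1$ possesses a weight different from $\lambda$, I conclude that $\Delta(r)$ kills each isotypic component, so $\Delta(r)=0$; as weight vectors span $R(\g)$, this gives $\Delta=0$, completing the proof.

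I expect this last step for $\Delta$ to be the main obstacle, since the interplay between the intertwiner property and the symmetry condition is essential: the module $V(1)$ already shows that neither condition alone forces $\Delta$ to vanish, and it is only the weight-mismatch argument that rules out the scalar-valued solutions permitted by case (3) of Remark \ref{rema22}.
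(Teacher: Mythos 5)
Your proof is correct, and its skeleton matches the paper's: reduce to Corollary \ref{corol21}, use perfectness to get $R(\g)^{\s}=\{0\}$ (hence all $m_\alpha\ge 1$), then kill $F$, $G$ and $\Delta$ in turn. The $F$ step is identical, and the $G$ step is the paper's argument in light disguise: the paper likewise shows that the blocks $\pi_j\phi(a)G|_{M_i}$ are morphisms of simple modules, kills them for $a=e,f$ because nilpotent maps are not surjective, and then recovers $\phi(h)G=0$ from $h=[e,f]$, where you instead use $\ker\phi(e)\cap\ker\phi(f)=\{0\}$ — equivalent bookkeeping. The genuine divergence is in the step $\Delta=0$, which you rightly call the crux. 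The paper argues by contradiction: it sets $N_0=\{r\in R(\g):\Delta(r)=0\}$, picks a highest-weight vector $r_\ell\in\ker\phi_\ell(e)$ in a simple complement $N_\ell$ of $N_0$, and shows from conditions (2) and (4) that every block $\Gamma_{ij}=\pi_j\Delta(r_\ell)|_{N_i}$ has image inside the one-dimensional space $\ker\phi_j(e)$, hence cannot be bijective and must vanish by Schur, contradicting $r_\ell\notin N_0$. Your argument is direct: symmetry plus the intertwiner property give $\Delta(r)(r')=0$ whenever $r,r'$ are weight vectors of distinct weights, and then the form $\mathrm{Id}_{V(m)}\otimes B$ of $\Delta(r)$ on each isotypic component, together with the fact that each $V(m_\alpha)$ with $m_\alpha\ge 1$ has at least two distinct weights, forces $B=0$, so $\Delta(r)=0$ for every weight vector $r$ simultaneously. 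Your route is contradiction-free and isolates exactly why conditions (2) and (4) are both needed (your $V(1)$ observation), at the cost of invoking the full weight grading and multiplicity spaces; the paper's route needs less machinery — only the line $\ker\phi(e)$ and the zero-or-bijective dichotomy for maps between simple modules.

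Two minor points of rigor. First, Corollary \ref{msimple} is stated for $A_{BD}$-structures, not for maps $F$, so (as the paper does) you should note that a nonzero $\pi_\alpha F$ would yield a nontrivial $A_{BD}$-structure on $\mathfrak{sl}(2)\ltimes_{\phi_\alpha}V(m_\alpha)$ via Remark \ref{rema22}(1), using $V(m_\alpha)^{\s}=\{0\}$. Second, your Schur steps (the scalars $c_a$ and the $\mathrm{Id}\otimes B$ form) presuppose an algebraically closed ground field; this is legitimate here because the paper fixes $\K$ algebraically closed throughout the sections on Levi decompositions, but it deserves a word.
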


\begin{proof}
Let $R(\mathfrak{g}) = M_1 \oplus \dots \oplus M_m$  be the decomposition of the $\mathfrak{sl}(2)$-module $R(\mathfrak{g})$ into a direct sum of simple $\mathfrak{sl}(2)$-submodules $M_i$, $i\in\{1,\dots,m\}$. Notice that the fact that $\mathfrak{g}$ is perfect implies that the dimension of $M_i$ is is greater than or equal to $2$. Let us denote by $\pi_i:R(\g)\to M_i$ the corresponding projection on each $i\le m$.

Let us  consider  an $A_{BD}$-structure $\circ$ on $\mathfrak{g}$ and
let $F:\mathfrak{sl}(2)\to R(\g)$, $G: R(\g)\to R(\g)$ and $\Delta: R(\g)\to {\mathfrak{gl}}(R(\g))$ be  as given in Corollary \ref{corol21}, so that
$$(a+r)\circ (a'+r')=\phi(a)F(a')+\phi(a)G(r')+\phi(a')G(r)+\Delta(r)(r'),$$
for all $a,a'\in \mathfrak{sl}(2)$ and $r,r'\in R(\g)$.

If $F$ is not identically zero, there exist $a\in \mathfrak{sl}(2)$ and $j\le m$ such that $F_j(a)=\pi_{j}F(a)\ne 0$. But, if $\phi_j(a)$ denotes the restriction of $\phi(a)$ to $M_j$, we then have
$$\phi_j(a')F_j(a)=\pi_j\phi(a')F(a)=\pi_j\phi(a)F(a')=\phi_j(a)\pi_jF(a')=\phi_j(a)F_j(a'),$$ for all $a,a'\in \mathfrak{sl}(2)$, but then,  as shown in the first case of Remark \ref{rema22}, the Lie algebra $\g_j=\mathfrak{sl}(2)\ltimes_{\phi_j} M_j$ would admit a nontrivial $A_{BD}$-structure, a contradiction with Corollary \ref{msimple}. Thus, $F=0$.   

Let us now see that $G$ is also identically null. Recall that, as $R(\g)$ is abelian, from assertion (3) in Corollary \ref{corol21} we get $\phi(b)\phi(a)G=\phi(a)G\phi(b)$ for all $a,b\in \mathfrak{sl}(2)$. Thus, if for $i,j \in \lbrace 1,\dots,m\rbrace$ we denote 
$D^a_{ij}=\pi_j \phi(a) G_{\vert_{M_i}}= \phi(a) \pi_j G_{\vert_{M_i}},$ one easily sees that $[b,D^a_{ij}(r_i)]=D^a_{ij}([b,r_i])$ for all $a,b\in \mathfrak{sl}(2)$, $r_i\in M_i$. This implies that $D^a_{ij}$ is a homomorphism of simple $\mathfrak{sl}(2)$-modules and, thus, either it is zero or it is bijective. But, as $D^a_{ij}(r_i)=\phi(a) \pi_j G(r_i)\in \phi(a)(M_j)$, one gets that if an element $a\in \mathfrak{sl}(2)$ verifies
 $[a,M_j] \ne M_j$ for all $j\le m$, then $D^a_{ij}=0$ for all $i,j$ and hence $\phi(a)G=0$. Let us prove that this implies that $G$ must be zero. Let
 $\lbrace h,e,f\rbrace$ be the standard   basis of $\mathfrak{sl}(2)$ with $[h,e]= 2e$, $[h,f]= -2f$, $[e,f]= h.$  We must have $\phi(e)G=0$ and $\phi(f)G=0$ because the restrictions of $\phi(e)$ and $\phi(f)$ to each $M_j$ are nilpotent maps so that $\phi(e)M_j\ne M_j$ and $\phi(f)M_j\ne M_j$. But then we also have $\phi(h)G=0$ because $\phi(h)=\phi(e)\phi(f)-\phi(f)\phi(e)$. As a consequence, $\phi(a)G=0$ for all $a\in \mathfrak{sl}(2)$, which can only occur when $G=0$. 

Notice that, since $F=0$ and $G=0$,  we then get $(a+r)\circ (a'+r')=\Delta(r)(r'),$
for all $a,a'\in \mathfrak{sl}(2)$ and $r,r'\in R(\g)$.
Let us consider $N_0=\{ r\in R(\g)\ :\ \Delta(r)=0\}$. It is clear that $N_0$ is a $\mathfrak{sl}(2)$-submodule of $R(\g)$ because $\mathfrak{sl}(2)\circ\g=\{0\}$ and left multiplications for $\circ$  are  derivations of the Lie algebra.   
Let us suppose that $N_0\ne R(\g)$. We can then find nonzero simple $\mathfrak{sl}(2)$-submodules $N_1,\dots, N_p$ of $R(\g)$ such that $R(\g)=N_0\oplus  N_1\oplus\cdots\oplus N_p$. Recall that $\dim(N_j)\ge 2$ and that, if $\lbrace h,e,f\rbrace$ is again  the standard   basis of $\mathfrak{sl}(2)$, then $\dim(\mbox{Ker}(\phi_j(e)))=1$ for all  $j\in\{1,\dots, p\}$, where $\phi_j$ denotes the restriction of $\phi(e)$ to each $N_j$. Let us fix $\ell\in\{1,\dots, p\}$ and choose a nonzero vector $r_\ell\in \mbox{Ker}(\phi_\ell(e))$. For any $i,j\in\{1,\dots, p\}$, define $\Gamma_{ij}:N_i\to N_j$ by $\Gamma_{ij}=\pi_j\Delta(r_\ell)_{|N_i}$. The assertion (4) of
Corollary \ref{corol21} gives $\phi(a)\Gamma_{ij}=\Gamma_{ij}\phi(a)$ for all $a\in \mathfrak{sl}(2)$, showing that $\Gamma_{ij}$ is a morphism of simple $\mathfrak{sl}(2)$-modules and, again, either it must be zero or invertible. But for all $r\in M_i$,  using once more the equalities (2) and (4) of
Corollary \ref{corol21}, we have
$$\phi_j(e)\Gamma_{ij}(r)=\pi_j\phi(e)\Delta(r_\ell)(r)=\pi_j\phi(e)\Delta(r)(r_\ell)=\pi_j\Delta(r)\phi(e)(r_\ell)=0.$$
This implies that $\Gamma_{ij}(M_i)\subset \mbox{Ker}(\phi_j(e))$, so that $\dim (\Gamma_{ij}(M_i))\le 1$, which together with the fact that $\dim(N_j)\ge 2$ shows that  $\Gamma_{ij}$ cannot be invertible and must vanish for all $1\le i,j\le m$. But this would imply $\Delta(r_\ell)=0$ and, therefore, $r_\ell\in  N_0\cap N_\ell$, a contradiction since we chose $r_\ell\ne 0$.

 So we  conclude that our assumption $N_0\ne R(\g)$ is false and consequently $\g \circ \g= \{0\}.$
 \end{proof}

\begin{remark} {\em The non-existence of $A_{BD}$-structures on finite-dimensional perfect Lie algebras of the form $\mathfrak{g}= \mathfrak{sl}(2)\ltimes_\phi R(\mathfrak{g})$ with abelian radical is in great contrast with the infinite-dimensional case;  notice that the infinite-dimensional example given in \cite{BO} has  $ \mathfrak{sl}(2)$ as its Levi summand.

Proposition \ref{sl2} is essential in the proof of our next result which generalizes Proposition \ref{SaidHassan} to an arbitrary perfect Lie algebra with abelian radical.}
\end{remark}

\begin{proposition} \label{le2} Let $\g=\s\ltimes_\phi R(\g)$ be a perfect Lie algebra with abelian radical. If $\g$ admits an $A_{BD}$-structure $\circ$, then $R(\g)\circ \g=\{0\}$.

As a consequence, $\mathfrak{g}\circ \mathfrak{g}\circ \mathfrak{g}=\lbrace 0 \rbrace$ and there exists a linear map $F: \mathfrak{s}\to R(\mathfrak{g})$ such that $\phi(a)F(a')=\phi(a')F(a)$ for all $a,a \in \mathfrak{s}$ and 
	$(a+r)\circ (a'+r')=\phi(a)F(a')$ for any $ a,a'\in \mathfrak{s}$, $r,r'\in R(\mathfrak{g}).$
\end{proposition}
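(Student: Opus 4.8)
The plan is to start from Corollary \ref{corol21}, writing the product as $(a+r)\circ(a'+r')=\phi(a)F(a')+\phi(a)G(r')+\phi(a')G(r)+\Delta(r)(r')$ with $F,G,\Delta$ subject to its conditions (1)--(4), and to reduce everything to showing $G=0$ and $\Delta=0$ (we work over the algebraically closed $\K$ fixed by the standing convention, so Schur's lemma is available). First I would record what perfectness buys: since $R(\g)$ is abelian and $\s$ semisimple, $[\g,\g]=\s\oplus\phi(\s)R(\g)$, so $\g=[\g,\g]$ forces $\phi(\s)R(\g)=R(\g)$; by complete reducibility this says exactly that $R(\g)^{\s}=\{0\}$ and that every simple $\s$-submodule of $R(\g)$ has dimension $\ge 2$. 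With this, $R(\g)\circ\g=\{0\}$ is equivalent to $G=0$ together with $\Delta=0$, because $r\circ(a'+r')=\phi(a')G(r)+\Delta(r)(r')$ and $\phi(\s)G(r)=0$ already gives $G(r)\in R(\g)^{\s}=\{0\}$. Granting $G=\Delta=0$, the two asserted consequences are immediate: $\g\circ\g\subset R(\g)$ because left multiplications take values in $R(\g)$, whence $\g\circ\g\circ\g\subset R(\g)\circ\g=\{0\}$, and the product collapses to $(a+r)\circ(a'+r')=\phi(a)F(a')$ with $F$ the map of condition (1).

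Next I would prove $G=0$. Condition (3) of Corollary \ref{corol21} says precisely that $\phi(a)G$ commutes with every $\phi(a')$, i.e. $\phi(a)G\in\mathrm{End}_{\s}(R(\g))$ for each $a\in\s$. Fixing a decomposition $R(\g)=\bigoplus_k M_k$ into simple $\s$-modules and taking $a$ nilpotent, I would examine the components $\pi_j\,\phi(a)G\,|_{M_k}\colon M_k\to M_j$. Each is a morphism of simple modules, hence zero or an isomorphism by Schur; but its image is contained in $\phi(a)(M_j)$, and a nilpotent $a$ acts nilpotently, hence non-surjectively, on the nonzero module $M_j$, so that image is a proper submodule and the component must vanish. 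Thus $\phi(a)G=0$ for every nilpotent $a$; choosing for each root an $\mathfrak{sl}(2)$-triple $\{e_\alpha,h_\alpha,f_\alpha\}$ gives $\phi(e_\alpha)G=\phi(f_\alpha)G=0$, whence $\phi(h_\alpha)G=\phi(e_\alpha)\phi(f_\alpha)G-\phi(f_\alpha)\phi(e_\alpha)G=0$, so $\phi(\s)G=0$ and therefore $G=0$. This is exactly the mechanism of Proposition \ref{sl2} read inside each isotypic block; alternatively one may restrict to each root $\mathfrak{sl}(2)$ and quote Proposition \ref{sl2} directly.

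Finally I would prove $\Delta=0$, which I expect to be the delicate step, since here there is no analogue of the one-dimensionality of $\ker\phi(e)$ used in Proposition \ref{sl2}. Set $B(r,r')=\Delta(r)(r')$. Condition (4) gives $\Delta(r)=B(r,-)\in\mathrm{End}_{\s}(R(\g))$, and combining the symmetry (2) with (4) yields the ``adjunction'' $B(\phi(a)r,r')=\phi(a)B(r,r')$, so that for each fixed $r'$ the map $\beta(r'):=B(-,r')$ also lies in $\mathrm{End}_{\s}(R(\g))$. The crucial identity is $\beta(\phi(a)r')=\phi(a)\,\beta(r')$, again a consequence of (4); it forces the operator $\phi(a)\beta(r')$ to be an $\s$-module endomorphism. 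Commuting $\phi(a)\beta(r')$ with an arbitrary $\phi(b)$ and using $\beta(r')\in\mathrm{End}_{\s}(R(\g))$ gives $\phi([b,a])\,\beta(r')=0$, and since $\s=[\s,\s]$ this yields $\phi(c)\beta(r')=0$ for all $c\in\s$. Hence the image of $\beta(r')$ lies in $R(\g)^{\s}=\{0\}$, so $\beta(r')=0$ for every $r'$, that is $\Delta=0$. The main obstacle is precisely this step: spotting the adjunction $B(\phi(a)r,r')=\phi(a)B(r,r')$ and then using perfectness of both $\s$ and the $\s$-module $R(\g)$ to collapse $\Delta$, rather than arguing componentwise as in the $\mathfrak{sl}(2)$ case. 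Together with $G=0$ this gives $R(\g)\circ\g=\{0\}$ and the stated normal form.
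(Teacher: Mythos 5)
Your proposal is correct, and it takes a genuinely different route from the paper. The paper argues by contradiction and leans entirely on Proposition \ref{sl2}: given $r_0$ with $r_0\circ\g\neq\{0\}$, it picks an ${\mathfrak{sl}}(2)$-triple $\s_1$ whose action does not kill the relevant vector, projects $R(\g)$ onto the sum $N$ of its nontrivial $\s_1$-submodules, and transports $\pi\Delta$ (respectively $\pi G$) to build a nontrivial $A_{BD}$-structure on the perfect algebra $\s_1\ltimes N$, contradicting Proposition \ref{sl2}; this is done in two cases ($\Delta(r_0)\neq 0$, then $G r_0\neq 0$), each requiring a chain of identities to see that the projected structure is still nonzero. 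You instead work directly with a general $\s$ at the level of Corollary \ref{corol21}. For $G$ your Schur-plus-nilpotency argument is the same mechanism the paper uses \emph{inside} the proof of Proposition \ref{sl2}, but applied componentwise for arbitrary $\s$ (nilpotent $a$ acts nilpotently on each simple summand, so the components of $\phi(a)G$ cannot be isomorphisms; then root ${\mathfrak{sl}}(2)$-triples and $\phi(h_\alpha)=[\phi(e_\alpha),\phi(f_\alpha)]$ give $\phi(\s)G=0$, hence $G(R(\g))\subset R(\g)^{\s}=\{0\}$). For $\Delta$ your argument is new and strictly simpler than anything in the paper: conditions (2) and (4) of Corollary \ref{corol21} give $\Delta(\phi(a)r)=\phi(a)\Delta(r)$, and applying (4) to the element $\phi(a)r$ then yields $\phi(a)\phi(b)\Delta(r)=\phi(b)\phi(a)\Delta(r)$, i.e. $\phi([a,b])\Delta(r)=0$; since $\s=[\s,\s]$ and perfectness of $\g$ forces $R(\g)^{\s}=\{0\}$, this kills $\Delta$ outright. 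I checked each identity; they are all valid instances of (2) and (4). What each approach buys: the paper's reduction keeps all representation-theoretic work quarantined in Proposition \ref{sl2} (which it needs anyway for Lemma \ref{pr1}), whereas your proof is self-contained, avoids the projection/transport constructions and Proposition \ref{sl2} entirely, and your $\Delta$-step needs neither Schur's lemma nor algebraic closure --- specialized to $\s={\mathfrak{sl}}(2)$ it would even replace the $N_0$, $\Gamma_{ij}$, $\mathrm{Ker}(\phi_j(e))$ argument at the end of the paper's proof of Proposition \ref{sl2}.
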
	

\begin{proof} Let $F, G$ and $\Delta$ be the maps defined for $\circ$ as in Corollary \ref{corol21}. 
	Suppose that there exists $r_0\in R(\mathfrak{g})$ such that ${r_0}\circ \g\neq \{0\}$. Suppose first that $\Delta(r_0)\ne 0$ and take $r_0'\in R(\mathfrak{g})$ such that $\Delta({r_0})(r_0')\neq 0$.  Since $\mathfrak{g}$ is perfect,  there exists a $\mathfrak{sl}(2)$-triple $\mathfrak{s}_1$ of $\mathfrak{s}$ such that $\phi(\mathfrak{s}_1)\Delta({r_0})(r_0')\neq \lbrace 0 \rbrace$. Set $N$ the sum of all non-trivial $\mathfrak{s}_1$-submodules of the $\mathfrak{s}_1$-module $R(\mathfrak{g})$ so that $R(\mathfrak{g})=N\oplus M$ where $M=\cap_{a\in \mathfrak{s}_1} \mbox{Ker}(\phi(a))=R(\mathfrak{g})^{\mathfrak{s}_1}$. Let $\pi : R(\mathfrak{g})=N\oplus M \to N$ be the canonical projection,  consider $r_1=\pi(r_0)$, $r_1'=\pi(r_0')$ and choose $a_1\in\s_1$ such that $\phi(a_1)\Delta({r_0})(r_0')\ne 0$.
 Using identities (2) and (4) of Corollary \ref{corol21}, we have
\begin{eqnarray*}
 0&\ne&\phi(a_1)\Delta(r_0)r_0'=\Delta(r_0)\phi(a_1)r_0'=\Delta(r_0)\phi(a_1)r_1'=\phi(a_1)\Delta(r_0)r_1'=\phi(a_1)\Delta(r_1')r_0\\&=&\Delta(r_1')\phi(a_1)r_0=\Delta(r_1')\phi(a_1)r_1=\phi(a_1)\Delta(r_1')r_1=\phi(a_1)\Delta(r_1)r_1'=\phi(a_1)\pi\Delta(r_1)r_1'.
\end{eqnarray*}
Thus, there exist $r_1,r_1'\in N$ such that $\phi(\s_1)\pi\Delta(r_1)r_1\ne\{0\}$; this obviously implies that $\pi\Delta(r_1)r_1'\ne 0$. Define  $\tilde\g=\s_1\ltimes_\varphi N$ where $\varphi=\phi_{|\s_1}$, with $R(\g)=N$ abelian. It is clear that $\tilde\g$ is perfect. Define on $\tilde\g$ the product $(a+r)\bullet (a'+r')=\pi\Delta(r)(r')$ for all $s,s'\in\s_1$, $r,r'\in N$. It is obviously nonzero since $r_1\bullet r_1'\ne 0$ and one easily sees that it verifies the conditions of Corollary \ref{corol21}, and hence defines an $A_{BD}$-structure on $\tilde\g$. But, since $\s_1$ is isomorphic to $\mathfrak{sl}(2)$, this   contradicts Proposition \ref{sl2}. We thus conclude that $\Delta(r_0)= 0.$
	
Since ${r_0}\circ \g\neq \{0\}$ but $\Delta(r_0)= 0$, we must have $Gr_0\ne 0$ and then we may find a ${\mathfrak{sl}}(2)$-triple $\s_1$ in $\s$ such that $\phi(\s_1)Gr_0\ne \{0\}$. Decompose $R(\g)$, as before, into the sum of two $\s_1$-submodules $R(\mathfrak{g})=N\oplus M$ where $M=R(\g)^{\s_1}$ and let $\pi:N\oplus M\to N$ be  the natural projection. Since $\s_1=[\s_1,\s_1]$, the condition $\phi(\s_1)Gr_0\ne \{0\}$ implies that there exist $a_1,a_1'\in\s_1$ such that $\phi([a_1,a_1'])Gr_0\ne \{0\}$. But then either $\phi(a_1)\phi(a_1')Gr_0\ne 0$ or $\phi(a_1')\phi(a_1)Gr_0\ne 0$ because $\phi([a_1,a_1'])=[\phi(a_1),\phi(a_1')]$. We may suppose without loss of generality $\phi(a_1')\phi(a_1)Gr_0\ne 0$. But in such a case, if $r_1=\pi(r_0)$,  using  identity (3) of Corollary \ref{corol21}, we get
$$
 0\ne\phi(a_1')\phi(a_1)Gr_0=\phi(a_1)G\phi(a_1')r_0=\phi(a_1)G\phi(a_1')r_1
=\phi(a_1')\phi(a_1)Gr_1=\phi(a_1')\phi(a_1)\pi(Gr_1).
$$
This clearly implies that there exist $a_1\in\s_1$ and $r_1\in N$ such that $\phi(a_1)\pi(Gr_1)\ne 0$. Now, define on the perfect Lie algebra $\tilde\g=\s_1\ltimes_\varphi N$, where $\varphi=\phi_{|\s_1}$ and $N$ is abelian, the product $(a+r)\bullet (a'+r')=\varphi(a)\pi(Gr')+\varphi(a')\pi(Gr)$, for all $a,a'\in\s_1$, $r,r'\in N$. One easily sees that the conditions of the second case in Remark \ref{rema22} are verified and thus we would obtain a nontrivial $A_{BD}$-structure on $\tilde\g$, which is impossible, according to Proposition \ref{sl2}. As a result,  ${r_0}\circ \g= \{0\}$, showing that $R(\g)\circ \g=\{0\}$.

The consequences are now straightforward from Corollary \ref{corol21}.
\end{proof}
	
\begin{lemma}\label{pr1} Let $\g=\s\ltimes_\phi R(\g)$ be a perfect Lie algebra with abelian radical and let $\circ$ be an $A_{BD}$-structure  on $\g$. For every subalgebra $\s_1$ defined by a ${\mathfrak{sl}}(2)$-triple, it holds that $\s_1\circ\s_1=\{0\}$.
\end{lemma}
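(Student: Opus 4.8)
The plan is to exploit the very restrictive form that Proposition \ref{le2} forces on $\circ$, together with the $\mathfrak{sl}(2)$-result of Proposition \ref{sl2}. Since $\g$ is perfect with abelian radical and carries the $A_{BD}$-structure $\circ$, Proposition \ref{le2} gives a linear map $F:\s\to R(\g)$ with $\phi(a)F(a')=\phi(a')F(a)$ for all $a,a'\in\s$ and
$$(a+r)\circ(a'+r')=\phi(a)F(a'),\qquad a,a'\in\s,\ r,r'\in R(\g).$$
In particular, for $a,a'\in\s_1$ one has $a\circ a'=\phi(a)F(a')$, so proving $\s_1\circ\s_1=\{0\}$ amounts to showing $\phi(a)F(a')=0$ for all $a,a'\in\s_1$.

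First I would regard $R(\g)$ as an $\s_1$-module and split it as $R(\g)=N\oplus M$, where $M=R(\g)^{\s_1}$ collects the trivial $\s_1$-submodules and $N$ is the sum of the non-trivial simple $\s_1$-submodules. Writing $\pi:R(\g)\to N$ for the (equivariant) projection and $F_1=F|_{\s_1}$, the fact that $\phi(a)$ annihilates $M$ for $a\in\s_1$ yields $\phi(a)F(a')=\phi(a)\,\pi F_1(a')$, so that only $\hat F:=\pi F_1:\s_1\to N$ matters, and $\hat F$ still satisfies $\phi(a)\hat F(a')=\phi(a')\hat F(a)$ by equivariance of $\pi$. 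If $N=\{0\}$ the claim is immediate, so I would assume $N\neq\{0\}$.

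Next I would form the auxiliary Lie algebra $\tilde\g_1=\s_1\ltimes_\varphi N$ with $\varphi=\phi|_{\s_1}$ and $N$ abelian. The key point to check is that $\tilde\g_1$ is \emph{perfect}: since each non-trivial simple $\mathfrak{sl}(2)$-module $V(m)$ with $m\geq 1$ satisfies $\varphi(\s_1)V(m)=V(m)$ (its image under $\varphi(\s_1)$ is a nonzero submodule), one gets $N=\varphi(\s_1)N=[\s_1,N]$, whence $[\tilde\g_1,\tilde\g_1]=\tilde\g_1$. On $\tilde\g_1$ the product $(a+n)\bullet(a'+n')=\varphi(a)\hat F(a')$ is an $A_{BD}$-structure by the first case of Remark \ref{rema22} (equivalently Corollary \ref{corol21} with $G=\Delta=0$, condition (1) being the symmetry of $\hat F$). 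As $\tilde\g_1$ is a perfect Lie algebra with abelian radical and $\s_1\cong\mathfrak{sl}(2)$, Proposition \ref{sl2} forces this structure to be trivial, i.e. $\varphi(a)\hat F(a')=0$ for all $a,a'\in\s_1$. Combining this with $\phi(a)F(a')=\phi(a)\hat F(a')=\varphi(a)\hat F(a')$ gives $a\circ a'=0$, as required.

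The two reductions to $N$ are routine; the main obstacle I anticipate is verifying the perfectness of $\tilde\g_1$, which hinges on the module-theoretic fact that $\mathfrak{sl}(2)$ acts surjectively on each of its non-trivial simple modules. Once perfectness is secured, the whole statement collapses onto Proposition \ref{sl2}.
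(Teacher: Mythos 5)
Your proof is correct and takes essentially the same approach as the paper's: use Proposition \ref{le2} to reduce $\circ$ to the form $\phi(a)F(a')$, decompose $R(\g)=N\oplus M$ relative to $\s_1$, transfer $\pi F$ to the perfect Lie algebra $\s_1\ltimes_\varphi N$ with abelian radical, and invoke Proposition \ref{sl2}. The only difference is presentational: the paper argues by contradiction (assuming $\s_1\circ\s_1\ne\{0\}$ yields a nontrivial $A_{BD}$-structure on $\s_1\ltimes N$), whereas you conclude directly that the induced structure is trivial and hence $\s_1\circ\s_1=\{0\}$.
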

\begin{proof}
Suppose that there exist $a,a'\in\s_1$ such that $a\circ a'\ne 0$. According to Proposition \ref{le2}, there exists $F:\s\to R(\g)$ such that $a\circ a'=\phi(a)Fa'$. Let $N$ be the sum of nontrivial $\s_1$-submodules of $R(\g)$,   put $M=R(\g)^{\s_1}$ and let $\pi:R(\g)=N\oplus M\to N$ be the natural projection. On the perfect Lie algebra $\tilde\g=\s\ltimes_{\varphi} N$, where $\varphi=\phi_{|N}$ and  $R(\tilde\g)=N$ is abelian, let us define a product  by $(a+r)\bullet (a'+r')=\varphi(a)\pi F(a').$ It is easy to deduce from Proposition \ref{proposition21} that $\bullet$ provides a nontrivial $A_{BD}$-structure on $\tilde\g$, a contradiction with Proposition \ref{sl2}. As a consequence, $\s_1\circ\s_1=\{0\}$.
\end{proof}

\begin{corollary}\label{corollary23}  Let $\g=\s\ltimes_\phi R(\g)$ be a perfect Lie algebra with abelian radical and let $\circ$ be an $A_{BD}$-structure  on $\g$.
If $\h$ is a  Cartan subalgebra of $\s$, then $\h\circ \mathfrak{g}=\lbrace 0 \rbrace$.
\end{corollary}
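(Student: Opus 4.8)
The plan is to invoke Proposition \ref{le2} to collapse the whole structure onto the single map $F$, and then to run a weight/eigenvalue argument driven by the fact that each left multiplication $L_x:y\mapsto x\circ y$ is a derivation of $\g$. By Proposition \ref{le2} we may write $(a+r)\circ(a'+r')=\phi(a)F(a')$ with $\phi(a)F(a')=\phi(a')F(a)$, and $R(\g)\circ\g=\{0\}$. Since $\circ$ is commutative, $\h\circ R(\g)=R(\g)\circ\h\subseteq R(\g)\circ\g=\{0\}$, so only $\h\circ\s=\{0\}$ remains. Fixing a root-space decomposition $\s=\h\oplus\bigoplus_{\gamma}\s_\gamma$ relative to $\h$ and recalling that the coroots $h_\gamma=[e_\gamma,f_\gamma]$ span $\h$, I would reduce everything to proving $h_\gamma\circ a'=0$ for every root $\gamma$ and every $a'\in\s$.

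The engine is the elementary identity $[\ad(h),L_x]=-\ad(L_x(h))=-\ad(x\circ h)$, valid because $L_x$ is a derivation of $\g$. Taking $h=h_\gamma$ and $x\in\s_\gamma$, Lemma \ref{pr1} yields $x\circ h_\gamma=0$, so $L_{e_\gamma},L_{f_\gamma},L_{h_\gamma}$ all commute with the semisimple operator $\ad(h_\gamma)$ and hence preserve its eigenspaces $\g_k(h_\gamma)$, where the eigenvalue of $r\in R(\g)$ is its $\phi(h_\gamma)$-eigenvalue. For the Cartan part this closes at once: if $h_\delta\in\h=\ker\ad(h_\gamma)$, then $h_\gamma\circ h_\delta=\phi(h_\gamma)F(h_\delta)$ lies in $\ker\phi(h_\gamma)\cap\operatorname{Im}\phi(h_\gamma)=\{0\}$ by semisimplicity of $\phi(h_\gamma)$, so $\h\circ\h=\{0\}$.

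For a root vector $e_\beta\in\g_c(h_\gamma)$ with $c=\langle\beta,\gamma^\vee\rangle$, the three vectors $\phi(h_\gamma)F(e_\beta),\ \phi(e_\gamma)F(e_\beta),\ \phi(f_\gamma)F(e_\beta)$ are the images $L_{h_\gamma}(e_\beta),L_{e_\gamma}(e_\beta),L_{f_\gamma}(e_\beta)$, hence all lie in $\g_c(h_\gamma)$. I would then decompose $F(e_\beta)$ into $\phi(h_\gamma)$-eigencomponents $v_k$; since $\phi(e_\gamma)$ and $\phi(f_\gamma)$ shift the $\phi(h_\gamma)$-eigenvalue by $+2$ and $-2$, the requirement that these three images be pure $c$-eigenvectors forces $v_k=0$ for $k\notin\{0,c\}$ and shows that the only possibly surviving component $v_c$ is annihilated by both $\phi(e_\gamma)$ and $\phi(f_\gamma)$; as a vector of an $\mathfrak{sl}(2)$-module killed by both raising and lowering operators it has weight $0$, forcing $v_c=0$ whenever $c\neq 0$. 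The degenerate cases $c=\pm 2$, which for roots mean $\beta=\pm\gamma$, are covered directly by Lemma \ref{pr1}. Thus $h_\gamma\circ e_\beta=\phi(h_\gamma)F(e_\beta)=0$ for all $\gamma,\beta$, so $\h\circ\s_\beta=\{0\}$; combining with the Cartan part gives $\h\circ\s=\{0\}$ and finally $\h\circ\g=\{0\}$.

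The main obstacle is exactly the mixed term $h_\gamma\circ e_\beta$ with $\beta\neq\pm\gamma$: such a product cannot be captured by restricting to a single $\mathfrak{sl}(2)$-triple, since no triple contains a generic coroot together with an unrelated root vector, so Lemma \ref{pr1} and Proposition \ref{sl2} do not apply to it directly. The device that bypasses this is the commutation of the derivations $L_{e_\gamma},L_{f_\gamma},L_{h_\gamma}$ with the semisimple operator $\ad(h_\gamma)$, which confines all the relevant products to one $\ad(h_\gamma)$-eigenspace and lets the $\mathfrak{sl}(2)$ eigenvalue bookkeeping force their vanishing; verifying that the images really are pure $c$-eigenvectors and that the surviving component is killed by both $\phi(e_\gamma)$ and $\phi(f_\gamma)$ is the delicate bookkeeping step.
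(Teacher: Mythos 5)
Your proof is correct, and it takes a genuinely different route from the paper's. Both arguments start from Proposition \ref{le2} (reducing everything to the single map $F$ and to showing $\h\circ\s=\{0\}$) and both lean on Lemma \ref{pr1}, but they diverge from there. The paper first proves $\h\circ\h=\{0\}$ by polarization ($h_1\circ h_2\neq 0$ forces $h_j\circ h_j\neq 0$ for some $j$, which is excluded by placing $h_j$ inside an $\mathfrak{sl}(2)$-triple), and then kills $h\circ e_1$ and $h\circ f_1$, for any triple $\{h_1,e_1,f_1\}$ with $h_1\in\h$, by a short identity computation using only the symmetry $\phi(a)F(a')=\phi(a')F(a)$, the bracket relations, and $[\phi(h),\phi(h_1)]=0$; it concludes because $\s$ is spanned by such triples. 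You instead argue coroot by coroot with an operator-theoretic mechanism: the formula $[\ad(h_\gamma),L_x]=-\ad(x\circ h_\gamma)$ together with Lemma \ref{pr1} makes $L_{h_\gamma},L_{e_\gamma},L_{f_\gamma}$ commute with $\ad(h_\gamma)$ and preserve its eigenspaces, after which $\mathfrak{sl}(2)$-weight bookkeeping (plus $\mathrm{Ker}\,\phi(h_\gamma)\cap\mathrm{Im}\,\phi(h_\gamma)=\{0\}$ for the Cartan part) forces all products to vanish. Your route is longer, but it buys something concrete: you invoke Lemma \ref{pr1} only for honest coroot triples $\{h_\gamma,e_\gamma,f_\gamma\}$ and extend to all of $\h$ by bilinearity, whereas the paper's polarization step needs an arbitrary element of $\h$ (e.g.\ $h_1+h_2$ for arbitrary $h_1,h_2\in\h$) to lie in the span of some $\mathfrak{sl}(2)$-triple --- a property that fails in general (a generic element of the Cartan subalgebra of $\mathfrak{sl}(2)\oplus\mathfrak{sl}(2)$ lies in no such span), so your treatment of $\h\circ\h$ is on firmer footing. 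Two small remarks: your parenthetical claim that $c=\pm 2$ forces $\beta=\pm\gamma$ is false (in type $B_2$ a long root paired against a short coroot gives $c=2$), but that special case is superfluous anyway, since your weight argument --- $v_c$ is killed by both $\phi(e_\gamma)$ and $\phi(f_\gamma)$, hence has weight $0$, hence vanishes when $c\neq 0$ --- is valid for every $c\neq 0$, including $c=\pm 2$; and you should state explicitly that $\ad(h_\gamma)$ and $\phi(h_\gamma)$ are semisimple operators (representations preserve Jordan decomposition in characteristic zero), since that is what legitimizes the eigenspace decompositions and the kernel-image intersection you use.
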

\begin{proof} Let us first see that $\h\circ \h=\lbrace 0 \rbrace$. Suppose, on the contrary, that there exist  $h_1,h_2\in \h$ such that $h_1\circ h_2 \neq 0$. For $h_0=h_1+h_2$, we have, $h_0\circ h_0=h_1\circ h_1+h_2\circ h_2+2h_1\circ h_2$. Thus, $h_0\circ h_0-h_1\circ h_1 - h_2\circ h_2=2h_1\circ h_2 \neq 0$.
	Consequently, there exists $j\in\{0,1,2\}$ such that  $h_j\circ h_j \neq 0$. But, since $h_j\in\h$, we can find a  $\mathfrak{sl}(2)$-triple $\s_1$ such that $h_j\in\s_1$ and Lemma \ref{pr1} implies $h_j\circ h_j\in \s_1\circ\s_1=\{0\}$, a contradiction. Hence, we must have  $\h\circ \h=\lbrace 0 \rbrace$.  
	
		Let us now consider $h, h_1 \in \h$, and let $\lbrace h_1,e_1,f_1 \rbrace$ be a $\mathfrak{sl}(2)$-triple of $\mathfrak{s}$ with $[h_1,e_1]=2e_1$, $[h_1,f_1]=-2f_1$, $[e_1,f_1]=h_1$. Let $F:\s\to R(\g)$ be the map such that $a\circ a'=\phi(a)Fa'$, guaranteed by Proposition \ref{le2}. As $\h\circ \h=\lbrace 0 \rbrace$, we obviously have $\phi(h)F(h_1)=\phi(h_1)F(h)=0$, and the identity $\s_1\circ\s_1=\{0\}$ gives $\phi(h_1)F(e_1)=\phi(h_1)F(f_1)=0$. Therefore, using also that $\phi(h)\phi(h_1)=\phi(h_1)\phi(h)$ because $[h,h_1]=0$, we obtain		
		\begin{eqnarray*}
2\phi(e_1)Fh&=&\phi([h_1,e_1])Fh= \phi(h_1)\phi(e_1)Fh- \phi(e_1)\phi(h_1)Fh=
\phi(h_1)\phi(e_1)Fh\\&=&\phi(h_1)\phi(h)Fe_1=\phi(h)\phi(h_1)Fe_1=0,\\
2\phi(f_1)Fh&=&\phi([f_1,h_1])Fh= \phi(f_1)\phi(h_1)Fh- \phi(h_1)\phi(f_1)Fh
=-\phi(h_1)\phi(f_1)Fh\\&=&-\phi(h_1)\phi(h)Ff_1=\phi(h)\phi(h_1)Ff_1=0,\end{eqnarray*}
and one gets that $h\circ x=\phi(x)Fh=0$ for all $x$ in the ${\mathfrak{sl}}(2)$-triple. Now the result follows from the fact that $\s$ admits a basis which is a union of ${\mathfrak{sl}}(2)$-triples.\end{proof}

		\medskip
		
		We can now prove the main theorem of the paper.
		
		\smallskip
	
\begin{theorem}\label{main}
	Any $A_{BD}$-structure on a perfect Lie algebra over an arbitrary field of characteristic zero is trivial.
\end{theorem}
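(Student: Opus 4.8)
The plan is to assemble the structural results already established and then close the argument with a short weight-space computation. Since the statement concerns an arbitrary field $\K$ of characteristic zero, I would first reduce to the most convenient setting. By Lemma \ref{complexif}, if some perfect Lie algebra over $\K$ carried a nonzero $A_{BD}$-structure $\circ$, then its scalar extension $\overline{\g}=\g\otimes_\K\overline{\K}$ would be perfect and would inherit a nonzero $A_{BD}$-structure $\bullet$ (nonzero because $x\circ y\neq 0$ forces $(x\otimes 1)\bullet(y\otimes 1)=(x\circ y)\otimes 1\neq 0$). Hence it suffices to exclude nontrivial $A_{BD}$-structures over an algebraically closed field. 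Next I would apply Proposition \ref{propreduc} to reduce further to a perfect Lie algebra $\g=\s\ltimes_\phi R(\g)$ with abelian radical. Arguing by contradiction, I assume such a $\g$ admits a nontrivial $A_{BD}$-structure $\circ$.

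With the radical abelian, Proposition \ref{le2} pins down the product completely: $R(\g)\circ\g=\{0\}$ and $(a+r)\circ(a'+r')=\phi(a)F(a')$ for a linear map $F:\s\to R(\g)$ satisfying $\phi(a)F(a')=\phi(a')F(a)$. Thus the whole structure is governed by its restriction to $\s$, and the problem collapses to showing $\s\circ\s=\{0\}$. I would fix a Cartan subalgebra $\h$ of the semisimple part $\s$ and use the root decomposition $\s=\h\oplus\bigoplus_{\alpha}\s_\alpha$ (with one-dimensional root spaces, as $\K$ is algebraically closed), together with the $\h$-weight decomposition $R(\g)=\bigoplus_\mu R(\g)_\mu$ of the finite-dimensional module $R(\g)$. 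By Corollary \ref{corollary23} we have $\h\circ\g=\{0\}$, so $\phi(h)F(a)=0$ for all $h\in\h$ and $a\in\s$; equivalently $F(\s)\subset R(\g)_0$, the zero-weight space.

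The key computation is then immediate: for roots $\alpha,\beta$ and root vectors $e_\alpha,e_\beta$, since $\phi(\s_\alpha)R(\g)_\mu\subset R(\g)_{\mu+\alpha}$ and $F(e_\beta)\in R(\g)_0$, we obtain $e_\alpha\circ e_\beta=\phi(e_\alpha)F(e_\beta)\in R(\g)_\alpha$; by commutativity of $\circ$ we also get $e_\alpha\circ e_\beta=e_\beta\circ e_\alpha\in R(\g)_\beta$. Distinct weight spaces meet only in $\{0\}$, so $e_\alpha\circ e_\beta=0$ whenever $\alpha\neq\beta$. The only remaining terms are the diagonal ones $e_\alpha\circ e_\alpha$; here I would invoke Lemma \ref{pr1}: each $e_\alpha$ lies inside an $\mathfrak{sl}(2)$-triple $\s_1=\langle h_\alpha,e_\alpha,f_\alpha\rangle$, whence $e_\alpha\circ e_\alpha\in\s_1\circ\s_1=\{0\}$. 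Combining these with $\h\circ\s=\{0\}$ and bilinearity yields $\s\circ\s=\{0\}$, and then Proposition \ref{le2} forces $\circ=0$, contradicting nontriviality.

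The genuine content of the theorem lies in the earlier reductions and in Propositions \ref{sl2} and \ref{le2}; granting those, the main obstacle here is largely bookkeeping — making sure the two base reductions (to algebraically closed fields and to abelian radicals) are legitimately chained before the weight-space argument is applied. The only subtle point in the final step is that weight considerations alone cannot kill the diagonal terms $e_\alpha\circ e_\alpha$ (which also lie in $R(\g)_\alpha$), so the $\mathfrak{sl}(2)$-triple identity of Lemma \ref{pr1} is indispensable precisely there.
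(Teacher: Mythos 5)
Your proof is correct, and it follows the paper's skeleton through the reductions (Lemma \ref{complexif}, then Proposition \ref{propreduc}, then Proposition \ref{le2}), but it closes the argument by a genuinely different route. The paper shows $\s\circ\s=\{0\}$ element-by-element: for $a\in\s$ it takes the Jordan decomposition $a=a_s+a_n$, places the semisimple part $a_s$ in a Cartan subalgebra (so $a_s\circ\g=\{0\}$ by Corollary \ref{corollary23}), places the nilpotent part $a_n$ in an $\mathfrak{sl}(2)$-triple via the Jacobson--Morozov theorem (so $a_n\circ a_n=0$ by Lemma \ref{pr1}), concludes $a\circ a=0$, and then polarizes to get $\s\circ\s=\{0\}$. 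You instead fix a single Cartan subalgebra and work with the root decomposition of $\s$ and the $\h$-weight decomposition of $R(\g)$: Corollary \ref{corollary23} forces $F(\s)\subset R(\g)_0$, the symmetry of $\circ$ together with the weight shift $\phi(\s_\alpha)R(\g)_0\subset R(\g)_\alpha$ kills every product $e_\alpha\circ e_\beta$ with $\alpha\neq\beta$, and Lemma \ref{pr1} applied to the standard root $\mathfrak{sl}(2)$-triples kills the diagonal terms. Both arguments rest on exactly the same two pillars (Corollary \ref{corollary23} and Lemma \ref{pr1}), so the difference is in how they are combined: your version avoids Jacobson--Morozov entirely (the only $\mathfrak{sl}(2)$-triples you need are the elementary ones attached to roots) and uses Corollary \ref{corollary23} for just one fixed Cartan subalgebra, at the cost of the weight-space bookkeeping; the paper's Jordan-decomposition argument is basis-free and slightly shorter, but invokes the conjugacy/embedding facts (every semisimple element lies in some Cartan subalgebra, every nilpotent element lies in some $\mathfrak{sl}(2)$-triple). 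Your closing remark is also accurate: weight considerations alone cannot dispose of $e_\alpha\circ e_\alpha$, so Lemma \ref{pr1} is indispensable precisely on the diagonal, just as it is (via Jacobson--Morozov) in the paper's treatment of the nilpotent parts.
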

\begin{proof} Let $\g$ be a perfect  Lie algebra over the field $\K$ and suppose that it admits a nontrivial $A_{BD}$-structure. According to Proposition \ref{propreduc}, we may find a perfect Lie algebra $\overline\g=\s\ltimes_\phi R(\overline\g)$, over the algebraic closure $\overline\K$ of $\K$,  with abelian radical also admitting a nontrivial $A_{BD}$-structure $\circ$.

	If $a\in \mathfrak{s}$, then $a=a_s+a_n$ where $a_s$ is  semisimple and $a_n$ is nilpotent. Since, $a_s$ is semisimple then there exists a Cartan subalgebra $\h$ of $\mathfrak{s}$ such that $a_s\in \h$ \cite[Prop. 10,p. 24]{Bour}. By Corollary \ref{corollary23}, we have $a_s\circ \tilde\g=\lbrace 0 \rbrace$. On the other hand, $a_n$ is nilpotent and then, by Jacobson-Morozov Theorem \cite[Prop. 2, p. 162]{Bour}, there exists a $\mathfrak{sl}(2)$-triple of $\mathfrak{s}$ containing $a_n$. Hence, $a_n\circ a_n=0$, by Lemma \ref{pr1}.
	Consequently, $a\circ a=(a_s+a_n)\circ(a_s+a_n)=a_s\circ(a_s+a_n)+a_n\circ a_s +a_n\circ a_n=0$, for all $a\in\s$.
	But now, if we consider $a,a'\in \mathfrak{s}$, then $(a+a')\circ(a+a')=0$, which implies that $a\circ a +2a\circ a'+a'\circ a'=0$ and, therefore, $a\circ a'=0$, so $\mathfrak{s}\circ \mathfrak{s}=\lbrace 0\rbrace$ and Proposition \ref{le2} shows that $\overline\g\circ \overline\g=\lbrace 0 \rbrace$, which contradicts the fact that $\circ$ is nontrivial. 
	
	As a consequence, $\g$ cannot admit a nontrivial $A_{BD}$-structure either. The result for a real Lie algebra follows at once from Lemma \ref{complexif}
\end{proof}

\medskip

We can use the equivalence of Theorem \ref{Equi} to reformulate our Theorem \ref{main} in order to give answer to the open question posed in \cite{br2} as follows:

\smallskip

\begin{theorem}
 Let $\g$ be a perfect   Lie algebra of finite dimension over an arbitrary field of characteristic zero and let $M$ be a $\g$-module of finite dimension.
			Any biderivation of $\g$ with values in $M$ is trivial. 
\end{theorem}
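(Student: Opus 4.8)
The plan is to derive this theorem as an immediate reformulation of the main result, Theorem \ref{main}, through the equivalence established in Theorem \ref{Equi}, so that no new structural work is required. I would argue by contradiction. Suppose $\g$ is perfect and that $\varphi$ is a nonzero symmetric biderivation of $\g$ with values in the finite-dimensional module $M$, with representation $\psi:\g\to{\mathfrak{gl}}(M)$. Applying the construction in the proof of Theorem \ref{Equi} (with the rôle of $\p$ now played by $\g$), I obtain the submodule $V\subseteq M$ spanned by $\bigcup_{k\ge 0}\M_k$ and a perfect Lie algebra $\hat\g=\g\oplus V$ carrying the symmetric product $(x+v)\circ(x'+v')=\varphi(x,x')$, which is a nonzero $A_{BD}$-structure precisely because $\varphi\ne 0$.

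With $\hat\g$ at hand, I would simply invoke Theorem \ref{main}: every $A_{BD}$-structure on a perfect Lie algebra over a field of characteristic zero is trivial. Since $\hat\g$ is such an algebra, the structure $\circ$ must vanish, contradicting its nontriviality and forcing $\varphi=0$. Conversely, the observation that an $A_{BD}$-structure is itself a symmetric biderivation with values in the adjoint module shows that the two statements are genuinely equivalent, so the present theorem is nothing but the module-theoretic phrasing of Theorem \ref{main} that answers the question of \cite{br2}.

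I expect essentially no genuine obstacle to remain, since the entire difficulty has already been absorbed into the earlier results: the construction and perfectness of $\hat\g$ in Theorem \ref{Equi}, and the nonexistence statement of Theorem \ref{main}, which itself incorporates the passage to the algebraic closure (Lemma \ref{complexif}) and the reduction to an abelian radical (Proposition \ref{propreduc}). The only points meriting explicit verification are bookkeeping ones concerning the finiteness and base-field hypotheses: $V$ is finite-dimensional because $V\subseteq M$ and $M$ is finite-dimensional, so $\hat\g$ is a finite-dimensional perfect Lie algebra over the same field of characteristic zero, exactly matching the setting in which Theorem \ref{main} was established. I would also stress that the substantive content is the \emph{symmetric} case, since a general biderivation need not vanish — the bracket of $\g$ is a nonzero antisymmetric biderivation with values in the adjoint module — and it is precisely this symmetric case that resolves the open problem.
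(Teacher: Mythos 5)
Your proposal is correct and is essentially the paper's own proof: the paper presents this theorem as an immediate consequence of the equivalence in Theorem \ref{Equi} combined with the main non-existence result Theorem \ref{main}, which is exactly the contradiction argument you run, and your bookkeeping point (that $V\subseteq M$ forces the constructed algebra $\hat\g=\g\oplus V$ to be finite-dimensional over the same field, so Theorem \ref{main} applies) is the only verification genuinely required. Your closing remark is moreover a legitimate catch: as printed the statement omits the word \emph{symmetric}, and without it the claim is false --- the Lie bracket itself is a nonzero, non-symmetric biderivation of any nonzero perfect $\g$ with values in the adjoint module --- so the theorem must be read, as the abstract and Theorem \ref{Equi} indicate, as a statement about symmetric biderivations, which is precisely what your argument proves.
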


\medskip

\begin{remark}{\em Notice that, since every CPA-structure is actually an $A_{BD}$-structure, 
we rediscover from Theorem \ref{main} the result of D. Burde and W. A. Moens assuring that perfect  Lie algebras do not admit nontrivial CPA-structures \cite[Th. 3.3]{burde2}. }
\end{remark}

\vskip 1 cm

\end{document}